\DeclareMathAlphabet{\mathpzc}{OT1}{pzc}{m}{it}
\newcounter{item}[section]
\newcounter{kirshr}
\newcounter{kirsha}
\newcounter{kirshb}
\newtheorem{theorem}{Theorem}[section]
\newtheorem{lemma}[theorem]{Lemma}
\newtheorem{proposition}[theorem]{Proposition}
\newtheorem{remark}[theorem]{Remark}
\newtheorem{definition}[theorem]{Definition}
\newcommand\undersym[2]{\raisebox{-6pt}{\tiny$#2$}{\kern-5pt}\mbox{$#1$}}
\newcommand\overcirc[1]{\raisebox{10pt}{\tiny$\circ$}{\kern-7pt}\mbox{$#1$}}
\title{\Large{Classification of 1-Weierstrass points on Kuribayashi quartics,\hspace*{-.12cm} I (with two parameters)}\footnote{This paper was presented in \textquotedblleft International
Conference on Mathematics, Trends, Development
(ICMTD12)\textquotedblright\,\, held at Cairo, Egypt, December
27-29, 2012.}}
\date{}
\author{{\small Eslam E. Badr$^{1}$ and Mohammed A. Saleem$^{2}$}}
\begin{document}
\bibliographystyle{plain}
\maketitle                     
\vspace*{-.95cm}

\begin{center}
${^1}$Department of Mathematics, Faculty of Science, \\[-0.15cm]
Cairo University, Giza, Egypt \\[-0.15cm]
\vspace*{.3cm}
${^2}$Department of Mathematics, Faculty of Science,\\[-0.15cm]
Sohag University, Sohag, Egypt \\[-0.15cm]
\end{center}

\begin{center}
$
\begin{array}{ll}
\text{Emails:}&\text{eslam@sci.cu.edu.eg, eslam60145@yahoo.com} \\
  &\text{abuelhassan@yahoo.com}
\end{array}
$
\end{center}

\maketitle \bigskip

\begin{abstract}
In this paper, we classify the 1-Weierstrass points of the
Kuribayashi quartic curves with two parameters $a$ and $b$ defined
by the equation
\[
C_{a,b}: x^{4}+y^{4}+z^{4}+ax^{2}y^{2}+b(x^{2}+y^{2})z^{2}=0,
\]
such that $(a^2-4)(b^2-4)(a^2-1)(b^{2}-a-2)\neq0$. Furthermore, the geometry of these points is investigated.\\\\
\textbf{MSC 2010}: Primary 14H55, 14R20; Secondary 14H37, 14H45, 14H50 \\\\
\textbf{Keywords}: Kuribayashi quartics, 1-Weierstrass points,
Flexes, Riemann surfaces, Group action, Automorphisms group, Orbits,
Fixed points.
\end{abstract}
\newpage

\section{Introduction}
Let $C_{a}$ be the smooth plane quartic curves defined by the
equation
\[
C_{a}:x^{4}+y^{4}+z^{4}+a(x^{2}y^{2}+x^{2}z^{2}+y^{2}z^{2})=0,\quad\quad
a\neq-1,\pm2.%
\]
These types of quartic curves are called \emph{Kuribayashi quartics
with one parameter. }Weierstrass points and automorphism groups of
Riemann surfaces of genus $3$ are studied in \cite{pa11, pa15}. In
particular, Weierstrass points and automorphism groups of $C_a$ are
studied in \cite{pa12}. Kuribayashi and his students used \emph{the
Wronskian method} to classify the number of the 1-Weierstrass points
of $C_a$. Alwaleed \cite{pa3, pa4} used the \emph{the Wronskian
method} together with the $S_4$ action on $C_a$ to classify the
number and investigate the geometry of the 2-Weierstrass points of
this family.

 Let $C_{a,b}$ be the smooth plane quartic curves defined by the equation
\[
C_{a,b}:x^{4}+y^{4}+z^{4}+ax^{2}y^{2}+b(x^{2}+y^{2})z^{2}=0,
\]
where $a$ and $b$ are two parameters such that
$(a^2-4)(b^2-4)(a^2-1)(b^{2}-a-2)\neq0.$ We call these quartic
curves \emph{Kuribayashi quartics with two parameter. }Hayakawa
\cite{pa5} investigated the conditions under which the number of the
Weierstrass points of $C_{a,b}$ is $12$ or $16$.

 In this paper, we use finite group actions on the Riemann surfaces $C_{a,b}$ to investigate the number of the 1-Weierstrass points of this family. Furthermore, we study the geometry of these points.

The present paper is organized in the following manner. In section
$1,$ we present some preliminaries concerning the basic concepts
that will be used throughout the work \cite{pa8, pa13}. In section
$2,$ we establish our main results \emph{(Theorems 2.6,\,\,\,2.14)} that
concern with the classification of the number of the 1-Weierstrass
points of the quartic curves $C_{a,b}$ together with their geometry.
In section $3$, we illustrate, through examples, the cases mentioned
in the main results. Finally, we conclude the paper with some
remarks, comments and related problems.

\section{Preliminaries}

\subsection{q-Weierstrass points}

Let $C$ be a smooth projective plane curve of genus $g\geq2$ and let $D$ be a
divisor on $C$ with $dim|D|=r\geq0$. We denote by $L(D)$ the $\mathbb{C}%
$-vector space of meromorphic functions $f$ such that $div(f)+D\geq0$ and
by $l(D)$ the dimension of $L(D)$ over $\mathbb{C}$. Then, the
notion of $D$-Weierstrass points \cite{pa13} can be defined in the following way:
\begin{definition}
Let $p\in C.$ If $n$ is a positive integer such that
\[
l\big(D-(n-1).p\big)>l\big(D-n.p\big),
\]
we call the integer $n$ a $D$-gap number at $p$.
\end{definition}

\begin{lemma}
Let $p\in C,$ then there are exactly $r+1$ $D$-gap numbers $\{n_{1},n_{2},...,n_{r+1}\}$ such that $n_{1}<n_{2}<...<n_{r+1}$. The sequence $\{n_{1},n_{2},...,n_{r+1}\}$ is called the $D$-gap sequence at $p$.
\end{lemma}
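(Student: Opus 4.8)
The plan is to analyze the monotone integer-valued function $\varphi(n):=l\big(D-n\cdot p\big)$ defined for $n=0,1,2,\dots$, and to show that it descends from $\varphi(0)=r+1$ to $\varphi(n)=0$ (for $n$ large) in steps of size $0$ or $1$; the $D$-gap numbers at $p$ are then exactly the $r+1$ indices at which $\varphi$ actually drops, and listing them in increasing order yields the asserted sequence $n_{1}<n_{2}<\cdots<n_{r+1}$.

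First I would record the elementary boundary facts. By hypothesis $\dim|D|=r$, so $\varphi(0)=l(D)=r+1$. Since $r\geq0$ forces $\deg D\geq 0$, once $n>\deg D$ the divisor $D-n\cdot p$ has negative degree, hence $L\big(D-n\cdot p\big)=0$ and $\varphi(n)=0$; thus $\varphi$ is eventually $0$. Moreover $\varphi$ is non-increasing because $L\big(D-n\cdot p\big)\subseteq L\big(D-(n-1)\cdot p\big)$.

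The heart of the argument, and the step I expect to require the most care, is the claim that for every $n\geq 1$
\[
\varphi(n-1)-\varphi(n)=l\big(D-(n-1)\cdot p\big)-l\big(D-n\cdot p\big)\in\{0,1\}.
\]
To prove it, fix a local uniformizer $t$ at $p$ and put $m=\operatorname{ord}_{p}(D)$. Reading the defining condition $\operatorname{div}(f)+D\geq0$ locally at $p$, a function $f\in L\big(D-(n-1)\cdot p\big)$ is exactly one with $\operatorname{ord}_{p}(f)\geq-(m-n+1)$, so it has a Laurent expansion $f=c_{f}\,t^{-(m-n+1)}+(\text{higher-order terms})$ at $p$. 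The map $f\mapsto c_{f}$ is $\mathbb{C}$-linear from $L\big(D-(n-1)\cdot p\big)$ to $\mathbb{C}$, and its kernel is precisely the set of $f$ with $\operatorname{ord}_{p}(f)\geq-(m-n)$, i.e. $L\big(D-n\cdot p\big)$. Hence $L\big(D-(n-1)\cdot p\big)/L\big(D-n\cdot p\big)$ injects into the one-dimensional space $\mathbb{C}$, giving the claim; in particular $n$ is a $D$-gap number at $p$ if and only if $\varphi(n-1)-\varphi(n)=1$. The subtle point here is checking that the kernel is exactly $L\big(D-n\cdot p\big)$ and nothing smaller, but this follows directly from the definition of $L(\cdot)$ once one localizes the inequality $\operatorname{div}(f)+D\geq 0$ at the single point $p$.

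Finally I would telescope. Because $\varphi$ is eventually $0$, the sum below is finite, and
\[
\sum_{n\geq1}\big(\varphi(n-1)-\varphi(n)\big)=\varphi(0)-0=r+1.
\]
Each summand equals $0$ or $1$ by the claim, so exactly $r+1$ of them equal $1$; the corresponding indices, written in increasing order, are the $D$-gap numbers $n_{1}<n_{2}<\cdots<n_{r+1}$ at $p$, which is the assertion. (Uniqueness of the list and strict monotonicity are automatic from the construction.) No further difficulties are anticipated beyond the dimension-drop step treated above.
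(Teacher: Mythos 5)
Your proof is correct. The paper itself gives no proof of this lemma --- it is stated as a standard preliminary fact taken from Miranda's book --- and your argument (the function $n\mapsto l(D-n\cdot p)$ starts at $r+1$, is eventually $0$, and drops by at most one at each step via the Laurent-coefficient map, so it drops exactly $r+1$ times) is precisely the standard proof found in that reference.
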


\begin{definition}
The integer
$\omega_{D}(p):=\sum_{i=1}^{r+1}(n_{i}-1)$ is called $D$-weight at $p$. If
$\omega_{D}(p)>0$, we call the point $p$ a $D$-Weierstrass point on $C$. In
particular, for the canonical divisor $K$, the $qK$-Weierstrass points $(q\geq1)$
are called $q$-Weierstrass points and the $qK$-weight is called $q$-weight, denoted by $\omega^{(q)}(p)$.
\end{definition}

\begin{definition}
\em{\cite{pa4}} A point $p$ on a smooth plane curve $C$ is said to
be a flex point if the tangent line $L_p$ meets $C$ at $p$ with
contact order $I_{p}(C,L_{p})$ at least three. We say that $p$ is
$i$-flex, if $I_{p}(C,L_{p})-2=i$. The positive integer $i$ is
called the flex order of $p.$
\end{definition}

\begin{lemma}
\em{\cite{pa8}} Let $C: F(x,y,z)=0$ be a smooth projective plane
curve. A point $p$ on $C$  is a flex point if, and only if,
$H_F(p)=0,$ where $H_F$ is the Hessian curve of $C$ defined by
\[H_F:=det\left(
             \begin{array}{ccc}
             F_{xx} & F_{xy} & F_{xz} \\
             F_{yx} & F_{yy} & F_{yz} \\
              F_{zx} & F_{zy} & F_{zz}
                    \end{array}
                  \right)
.\]

\end{lemma}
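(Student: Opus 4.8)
This is the classical Hessian criterion for inflection points, and my plan is to reduce it to a one–variable computation in a well chosen affine chart. I would begin by noting that both sides of the asserted equivalence are invariant under a projective change of coordinates: being a flex is defined purely through the intersection multiplicity $I_p(C,L_p)$, which is a projective invariant, and if $A\in GL_3(\mathbb{C})$ and $G(X)=F(AX)$ then the chain rule gives $\mathrm{Hess}(G)(X)=A^{\mathsf{T}}\,\mathrm{Hess}(F)(AX)\,A$, so $H_G(X)=(\det A)^2\,H_F(AX)$ and $H_F$ vanishes at $p$ exactly when $H_G$ vanishes at $A^{-1}p$. Hence I may assume once and for all that $p=[0:0:1]$ and that the tangent line $L_p$ is $\{y=0\}$.

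Next I would pass to the affine chart $z=1$, set $f(x,y):=F(x,y,1)$, so that $p$ becomes the origin, and record what the hypotheses say there: smoothness of $C$ at $p$ together with $L_p=\{y=0\}$ force $f(0,0)=0$, $f_x(0,0)=0$ and $f_y(0,0)\neq0$. Then I compute
\[
I_p(C,L_p)=\dim_{\mathbb{C}}\mathcal{O}_{\mathbb{A}^2,p}\big/(f,y)=\operatorname{ord}_{x=0}f(x,0),
\]
and since the Taylor expansion of $f(x,0)$ at the origin begins with $\tfrac12 f_{xx}(0,0)\,x^2$ (the constant and linear terms vanishing by the previous line), this order is $\geq3$ precisely when $f_{xx}(0,0)=0$. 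By the definition of a flex point given above, this is exactly the condition that $p$ be a flex.

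The remaining step is to express $H_F(p)$ through $f_{xx}(0,0)$, and here I would invoke Euler's identity for the degree-$d$ form $F$, namely $xF_x+yF_y+zF_z=dF$, and its differentiated version $\mathrm{Hess}(F)\cdot(x,y,z)^{\mathsf{T}}=(d-1)\,(F_x,F_y,F_z)^{\mathsf{T}}$. Evaluating at $p=(0,0,1)$ these yield $F_z(p)=0$, $F_{xz}(p)=(d-1)F_x(p)=0$, $F_{yz}(p)=(d-1)F_y(p)=(d-1)f_y(0,0)$ and $F_{zz}(p)=(d-1)F_z(p)=0$, while in the chart $F_{xx}(p)=f_{xx}(0,0)$, $F_{xy}(p)=f_{xy}(0,0)$, $F_{yy}(p)=f_{yy}(0,0)$. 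Substituting into the defining $3\times3$ determinant and expanding along its last row $\big(0,(d-1)f_y(0,0),0\big)$ gives
\[
H_F(p)=-(d-1)^2\,f_y(0,0)^2\,f_{xx}(0,0).
\]
Since we work over $\mathbb{C}$ with $d\geq2$ (indeed $d=4$ for the curves of this paper), $d-1\neq0$, and $f_y(0,0)\neq0$ by smoothness; therefore $H_F(p)=0$ if and only if $f_{xx}(0,0)=0$, which by the previous paragraph is equivalent to $p$ being a flex point, completing the argument.

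There is no serious obstacle here; the proof is elementary once the coordinates are normalized. The two points that need care are: making the reduction to $p=[0:0:1]$, $L_p=\{y=0\}$ rigorous, which is exactly the projective-invariance observation above; and correctly recovering the ``invisible'' second derivatives $F_{xz}$, $F_{yz}$, $F_{zz}$ at $p$ — these are not visible in the affine chart and must be extracted from homogeneity via Euler's relation — together with the sign and the scalar $(d-1)^2 f_y(0,0)^2$ in the determinant expansion, which is the only routine-but-error-prone computation in the proof.
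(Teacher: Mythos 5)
Your proof is correct. Note that the paper offers no argument of its own for this lemma: it is quoted as a classical fact with a citation to Gibson's book, so there is no internal proof to compare against. Your argument (reduce by projective invariance to $p=[0:0:1]$ with tangent $\{y=0\}$, compute $I_p(C,L_p)=\operatorname{ord}_{x=0}f(x,0)$ in the affine chart, and recover the $z$-derivatives via Euler's relation to get $H_F(p)=-(d-1)^2 f_y(0,0)^2 f_{xx}(0,0)$) is the standard textbook proof of the Hessian criterion, and all the computations check out; the only cosmetic addition worth making is the remark that $f(x,0)$ is not identically zero, i.e.\ that the tangent line is not a component of $C$, which is automatic for a smooth curve of degree at least $2$ (and in particular for the quartics of this paper).
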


\begin{lemma}
\em{\cite{pa14}} Let $C$ be a smooth projective plane quartic curve, the $1$-Weierstrass points on $C$ are nothing but flexes and divided into two types: ordinary flex and hyperflex points. Moreover, we have%

\begin{center}
    \begin{tabular}
[c]{|c|c|c|}\hline
$\omega^{(1)}(p)$ & $1$-gap Sequence & Geometry\\\hline
$1$ & $\{1,2,4\}$ & ordinary flex\\\hline
$2$ & $\{1,2,5\}$ & hyperflex\\\hline
\end{tabular}
\end{center}

\emph{\ }
\end{lemma}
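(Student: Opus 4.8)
The plan is to compute the $K$-gap sequence at an arbitrary point $p$ of a smooth plane quartic $C$ straight from Riemann--Roch, and then to translate each admissible sequence into the language of flexes.

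First I would set down the standing facts. A smooth plane quartic has genus $g=3$; it is non-hyperelliptic; and its plane model \emph{is} its canonical model, so the effective canonical divisors are precisely the divisors cut on $C$ by the lines of $\mathbb{P}^2$. Thus $\deg K = 4$ and $\dim|K| = 2$, so by Lemma 1.2 there are exactly three $K$-gap numbers $n_1 < n_2 < n_3$ at each $p$, and by the Weierstrass gap theorem they lie in $\{1,\dots,2g-1\}=\{1,\dots,5\}$. Using the Riemann--Roch identity $l(mp) = m+1-g+l(K-mp)$ for $0\le m\le 4$, together with $l(p)=1$ and $l(2p)=1$ (the latter forced by non-hyperellipticity), one checks that $1$ and $2$ are always gaps. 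So the only freedom is $n_3 \in \{3,4,5\}$, and which value occurs is governed by whether $l(K-3p)$ and $l(K-4p)$ vanish.

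Second I would interpret those two quantities geometrically. Under the identification of $L(K-mp)$ with the space of lines $L$ satisfying $I_p(C,L)\ge m$, we have $l(K-3p)>0$ exactly when some line meets $C$ at $p$ with contact order $\ge 3$, i.e.\ when $p$ is a flex (Definition 1.4); and, since B\'ezout forces $I_p(C,L)\le 4$ for every line, $l(K-4p)>0$ exactly when the tangent line meets $C$ at $p$ with contact order $4$, i.e.\ when $p$ is a hyperflex. Feeding this back into the gap computation closes the argument: if $p$ is not a flex then $l(K-3p)=0$, so $3$ is a gap, the sequence is $\{1,2,3\}$ and the $1$-weight is $0$; if $p$ is an ordinary flex then $l(K-3p)>0=l(K-4p)$, so $3$ is a non-gap while $4$ is a gap, giving the sequence $\{1,2,4\}$ with $\omega^{(1)}(p)=1$; and if $p$ is a hyperflex then $\deg(K-4p)=0$ with $l(K-4p)>0$ forces $K\sim 4p$, so both $3$ and $4$ are non-gaps and the sequence is $\{1,2,5\}$ with $\omega^{(1)}(p)=2$. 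This simultaneously shows that the $1$-Weierstrass points are precisely the flexes, that the only two types are ordinary flexes and hyperflexes, and it reproduces the table; Lemma 1.5 can be used as a practical tool for locating the flexes (via the Hessian), but it is not needed for the statement itself.

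The one delicate point is the bookkeeping matching $\dim L(K-mp)$ with the dimension of the linear system of lines through $p$ of contact order $\ge m$: this uses that the plane model is canonical (so hyperplane sections are exactly the effective canonical divisors) and that no line is a component of an irreducible smooth quartic (so contact orders are capped at $4$). Everything after that is a one-line substitution into Riemann--Roch, so I anticipate no genuine obstacle beyond carefully invoking non-hyperellipticity, which is precisely what guarantees $l(2p)=1$ and hence that the gap sequence can never begin $\{1,3,\dots\}$.
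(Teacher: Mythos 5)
Your argument is correct, but note that the paper itself offers no proof of this lemma: it is quoted from Vermeulen's thesis \cite{pa14} (as Lemma 1.6) and used as a black box, so what you have written is a self-contained replacement for the citation rather than an alternative to an argument in the text. Your route is the standard one and it is sound: since a smooth plane quartic is non-hyperelliptic of genus $3$ and is its own canonical model, $L(K-mp)$ is identified with the linear forms cutting $C$ with contact order $\geq m$ at $p$; Riemann--Roch plus $l(2p)=1$ forces $1,2$ to be gaps, B\'ezout caps the contact order of a line at $4$, and the trichotomy $l(K-3p)=0$, $l(K-3p)=1$ with $l(K-4p)=0$, and $l(K-4p)=1$ yields the sequences $\{1,2,3\}$, $\{1,2,4\}$, $\{1,2,5\}$, i.e.\ non-flex, ordinary flex, hyperflex respectively. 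One small bookkeeping remark: your weights $0,1,2$ are computed with the standard weight $\sum_i(n_i-i)$; the paper's Definition 1.3 literally says $\sum_i(n_i-1)$, which is evidently a typo (it would give weight $3$ at a generic point), and your values are the ones consistent with the table being proved. With that understood, I see no gap in the proposal.
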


\begin{lemma}
\em{\cite{pa7, pa13}} Let $C$ be a smooth projective plane curve of
genus $g$. The number of $q$-Weierstrass points $N^{(q)}(C),$
counted with their $q$-weights, is given by
\[
N^{(q)}(C)=\left\{
\begin{array}
[c]{lr}%
g(g^{2}-1),\,\,\,\,\,\,\,\,\,\,\,\,\,\,\,\,\,\,\,\,\,\,\,\,\,\,\,\,\,\,\,\,\,\,\,\,\,\,if\, q=1 & \\\\
(2q-1)^{2}(g-1)^{2}g,\,\,\,\,\,\,\,\,\,\,\,\,\,\,\,if\, q\geq2. &
\end{array}
\right.
\]
In particular, for smooth projective plane quartics $($i.e.\,\,$g=3)$, the number
of $1$-Weierstrass points is $24$ counted with their weights.
\end{lemma}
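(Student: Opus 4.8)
The plan is to realize the $q$-Weierstrass points, counted with their $q$-weights, as the zero divisor of a global Wronskian section and then to compute that divisor's degree via Riemann--Roch. Set $N:=l(qK)=\dim_{\mathbb{C}}L(qK)$ and fix a basis $f_{1},\dots,f_{N}$ of $L(qK)$, regarded as global holomorphic sections of the line bundle $\mathcal{O}(qK)$. At a point $p$, pick a local coordinate $t$ and a local trivialization of $\mathcal{O}(qK)$, and form the Wronskian $W(t)=\det\!\big(\partial^{\,i-1}f_{j}/\partial t^{\,i-1}\big)_{1\le i,j\le N}$. First I would carry out the standard change-of-coordinates computation: under $t\mapsto s$ the function $W$ gets multiplied by $(ds/dt)^{\binom{N}{2}}$ together with the transition factor of $\mathcal{O}(qK)^{\otimes N}$. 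This shows the $f_{j}$ assemble into a well-defined, not identically zero, global holomorphic section $\mathcal{W}$ of
\[
\mathcal{O}(qK)^{\otimes N}\otimes K^{\otimes\binom{N}{2}},
\]
and a local analysis at $p$ (this is exactly the classical content recorded in \cite{pa7, pa13}) identifies the order of vanishing of $\mathcal{W}$ at $p$ with the $qK$-weight $\omega^{(q)}(p)$ of Definition 1.3. Hence
\[
N^{(q)}(C)=\sum_{p\in C}\omega^{(q)}(p)=\deg\!\Big(\mathcal{O}(qK)^{\otimes N}\otimes K^{\otimes\binom{N}{2}}\Big)=N\,q(2g-2)+\binom{N}{2}(2g-2),
\]
using $\deg K=2g-2$; factoring out, $N^{(q)}(C)=(g-1)\,N\,(2q+N-1)$.

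It then remains to substitute the correct value of $N=l(qK)$. For $q\ge 2$, $\deg(qK)=q(2g-2)>2g-2$ and $\deg\big((1-q)K\big)<0$ (since $g\ge 2$), so $l\big((1-q)K\big)=0$ and Riemann--Roch gives $N=q(2g-2)-g+1=(2q-1)(g-1)$. Then $2q+N-1=(2q-1)+(2q-1)(g-1)=(2q-1)g$, and therefore
\[
N^{(q)}(C)=(g-1)\cdot(2q-1)(g-1)\cdot(2q-1)g=(2q-1)^{2}(g-1)^{2}g.
\]
The case $q=1$ must be treated separately because $K$ is special: here $N=l(K)=g$, so $2q+N-1=g+1$ and $N^{(1)}(C)=(g-1)\,g\,(g+1)=g(g^{2}-1)$. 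Finally, a smooth plane quartic has genus $g=\tfrac12(4-1)(4-2)=3$, so it carries $N^{(1)}(C)=3(3^{2}-1)=24$ flexes counted with weight; by Lemma 1.6 these are precisely its $1$-Weierstrass points, each of weight $1$ (ordinary flex) or $2$ (hyperflex).

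The only genuinely substantive point is the first step: verifying that the locally defined Wronskians glue to a global holomorphic section of the asserted line bundle and that its local vanishing order equals the Weierstrass weight. Everything afterwards is Riemann--Roch bookkeeping; the one pitfall there is the need to handle $q=1$ on its own, since the formula $N=(2q-1)(g-1)$ used for $q\ge 2$ would wrongly give $N=g-1$ rather than $g$. I would also keep the standing hypothesis $g\ge 2$ explicit, as it is what guarantees both that $K$ carries enough sections for the discussion and that $l\big((1-q)K\big)=0$ for every $q\ge 2$.
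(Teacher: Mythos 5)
Your proof is correct, and it is essentially the standard argument: the paper gives no proof of this lemma, citing Farkas--Kra and Miranda, and your route (globalizing the Wronskian of a basis of $L(qK)$ to a section of $\mathcal{O}(qK)^{\otimes N}\otimes K^{\otimes\binom{N}{2}}$, identifying its vanishing order with $\omega^{(q)}(p)$, then computing $N=l(qK)$ by Riemann--Roch, with $q=1$ handled separately since $l(K)=g$) is precisely the proof found in those references. The bookkeeping checks out: $N(g-1)(2q+N-1)$ gives $g(g^2-1)$ for $q=1$ and $(2q-1)^2(g-1)^2g$ for $q\geq2$, and $g=3$ yields $24$ for plane quartics; only note that the final remark about flexes belongs to Lemma 1.5 (Vermeulen), not to the statement being proved.
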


Let $W^{(q)}(C)$ be the set of $q$-Weierstrass points on $C$ and $G^{(q)}(p)$ the $q$-gap sequence at the point $p\in C$.

\begin{lemma} \em{\cite{pa3}}
Let $\tau$ be an automorphism on $C$, then we have
\[
\tau\big(W^{(q)}(C)\big)=W^{(q)}(C)\quad and\quad G^{(q)}(\tau(p))=G^{(q)}(p).
\]

\end{lemma}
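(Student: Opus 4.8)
The plan is to reduce the whole statement to one standard fact: an automorphism of a smooth projective curve preserves the canonical class, and therefore carries each linear system $|qK|$, together with all the gap data of Definition 1.1, to itself. To set this up, I would first recall that $\tau\colon C\to C$, being an isomorphism, induces a pullback on divisors by $\tau^{*}\big(\sum_{P}n_{P}\,P\big)=\sum_{P}n_{P}\,\tau^{-1}(P)$ and a pullback on meromorphic functions by $f\mapsto f\circ\tau$, and that these are compatible: $div(f\circ\tau)=\tau^{*}\big(div(f)\big)$. It follows that $f\mapsto f\circ\tau$ restricts to a $\mathbb{C}$-linear isomorphism $L(D)\xrightarrow{\ \sim\ }L(\tau^{*}D)$ for every divisor $D$, so $l(\tau^{*}D)=l(D)$; and since $\tau^{*}$ preserves linear equivalence and effectivity, $l$ of a divisor depends only on its linear equivalence class.

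The key step, which I expect to be the only one carrying any real content, is the invariance of the canonical class. If $\omega$ is a nonzero meromorphic differential on $C$, then $\tau^{*}\omega$ is again a nonzero meromorphic differential, and $div(\tau^{*}\omega)=\tau^{*}\big(div(\omega)\big)$; since both $div(\omega)$ and $div(\tau^{*}\omega)$ are canonical divisors and any two canonical divisors are linearly equivalent, we get $\tau^{*}K\sim K$, hence $\tau^{*}(qK)\sim qK$ for every $q\ge 1$. Everything after this is bookkeeping.

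Finally I would put the pieces together. Fix $p\in C$ and $n\ge 1$. Applying the isomorphism $f\mapsto f\circ\tau$ to $L\big(qK-n\,\tau(p)\big)$ and using $\tau^{*}\big(\tau(p)\big)=\tau^{-1}\big(\tau(p)\big)=p$ together with $\tau^{*}(qK)\sim qK$, one gets
\[
l\big(qK-n\,\tau(p)\big)=l\big(\tau^{*}(qK)-n\,p\big)=l\big(qK-n\,p\big).
\]
Hence, by the definition of a $qK$-gap number (Definition 1.1), $n$ is a $qK$-gap at $\tau(p)$ if and only if it is a $qK$-gap at $p$; since in both cases the gaps form the increasing sequence guaranteed by Lemma 1.2, this yields $G^{(q)}(\tau(p))=G^{(q)}(p)$. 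Summing $n_{i}-1$ over this common sequence gives $\omega^{(q)}(\tau(p))=\omega^{(q)}(p)$, so $p$ is a $q$-Weierstrass point exactly when $\tau(p)$ is; because $\tau$ is a bijection of $C$, this equivalence is precisely the statement $\tau\big(W^{(q)}(C)\big)=W^{(q)}(C)$.
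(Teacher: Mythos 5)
Your proof is correct: the paper itself gives no argument for this lemma (it simply quotes it from the cited thesis of Alwaleed), and your reduction to the invariance of the canonical class under pullback, together with the isomorphisms $L(D)\cong L(\tau^{*}D)$ and the invariance of $l$ under linear equivalence, is exactly the standard argument one would supply. The only cosmetic remark is that the independence of $l(D)$ from the choice of divisor in its linear equivalence class is a separate standard fact (multiplication by the function realizing the equivalence), not a consequence of $\tau^{*}$ preserving equivalence, but this does not affect the validity of your proof.
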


\bigskip
\subsection{Group Action on Riemann Surfaces}
\vspace*{-.9cm}$\phantom{GroupActiononRiemannSurfaceaaaaaaaaaaaaaaaaaaaa}$[13]
\begin{definition}
An action of a finite group $G$ on a Riemann surface $C$ is a map
\[
\cdot:G\times C\longrightarrow C:(g,p)\longmapsto g\cdot p
\]
such that: $(gh)\cdot p=g\cdot(h\cdot p)$\,\,and\,\,$e\cdot p=p,$\,\,for all
$g,h\in G$ and $p\in C$. Here $e$ denotes the identity element of $G.$
\end{definition}

\begin{definition}
The orbit of a point $p\in C$ is the set\,\, $Orb_{G}(p):=\{g\cdot p:g\in G\}.$
\end{definition}

\begin{definition}
The stabilizer of a point $p\in C$ is the subgroup
\[
G_{p}:=\{g\in G:g\cdot p=p\}.
\]
It is often called the isotropy subgroup of $p.$
\end{definition}

\begin{remark}
\em{It is well known that $G_{p}$ is cyclic and points in the same
orbit have conjugate stabilizers; Indeed, $G_{g\cdot
p}=gG_{p}g^{-1}.$ Moreover
$|Orb_{G}(p)|\,|G_{p}|=|G|.$}\vspace*{.3cm}
\end{remark}
\noindent\textbf{Notation.}\,[1] The set of points $p\in C$ such
that $|G_{p}|>1$ is denoted by $X(C).$ Also
\[
X_{i}(C):=\{p\in C:|G_{p}|=i\}.
\]


\section{Main Results}

In what follows, we investigate the number of the 1-Weierstrass points of \emph{Kuribayashi quartic curves with two parameter} $a$ and $b$ defined by the equation
\[
C_{a,b}:x^{4}+y^{4}+z^{4}+ax^{2}y^{2}+b(x^{2}+y^{2})z^{2},
\]
such that $(a^2-4)(b^2-4)(a^2-1)(b^{2}-a-2)\neq0.$
Moreover, the geometry of these points is studied. We have two cases, either $b=0$ or $b\neq0$. In the following we treat each of these cases.

\subsection{Case $b=0$}

In this case, we consider the one parameter family of smooth plane quartics
$C_{a,0}$ defined by%
\[
C_{a,0}:\quad x^{4}+y^{4}+z^{4}+ax^{2}y^{2},\quad\quad (a^2-1)(a^2-4)\neq0.
\]
A group action of order $16$ on $C_{a,0}$ can be defined as follows.
Let $G$ be the projective transformation group generated by the
three elements $\sigma,\,\tau,\,\rho$ of orders $2,4,4$
respectively, where
\[
\sigma:=\left(
\begin{array}
[c]{ccc}%
-1 & 0 & 0\\
0 & 1 & 0\\
0 & 0 & 1
\end{array}
\right),\quad\tau:=\left(
\begin{array}
[c]{ccc}%
i & 0 & 0\\
0 & -i & 0\\
0 & 0 & 1
\end{array}
\right),\quad\rho:=\left(
\begin{array}
[c]{ccc}%
0 & -1 & 0\\
1 & 0 & 0\\
0 & 0 & 1
\end{array}
\right).
\]
It has been shown by Francesc \cite{pa23} that, $G\cong
C_4\circledcirc \left(C_2\times C_2\right)$, where $C_m$ denotes the
cyclic group of order $m$ and $C_4\circledcirc \left(C_2\times
C_2\right)$ denotes the group $(16, 13)$ in GAP library of small
groups which is a group in $Ext^1\left(C_2\times C_2, C_4\right)$.
Now, computing the fixed points of the automorphisms of $G$ on
$C_{a,0}$ and their corresponding orbits gives rise to the following
result.

\begin{lemma}
For the quartics $C_{a,0},$ we have:
\[X(C_{a,0})=X_{2}(C_{a,0})\cup X_{4}(C_{a,0}).\]
Moreover, if $a=3$, then
\[X(C_{a,0})=Orb_{G}[\beta:1:0]\cup Orb_{G}[\alpha:\alpha:1]\cup Orb_{G}[0:\delta:1]\cup Orb_{G}[i:1:1],\]
otherwise; \[X(C_{a,0})=Orb_{G}[\beta:1:0]\cup Orb_{G}[\alpha:\alpha:1]\cup Orb_{G}[0:\delta:1],\]
where $\beta$ is a root of the equation $x^{4}+ax^{2}+1=0,$\,\,  $\alpha$ is a root of the equation $(a+2)x^{4}+1=0$\,\,and $\delta$ is a root of the equation $x^{4}+1=0.$
\end{lemma}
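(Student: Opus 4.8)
The plan is to compute explicitly, for each of the ten nontrivial elements of $G$ (three generators $\sigma,\tau,\rho$ and the remaining elements of the order-$16$ group), the set of fixed points on $\mathbb{P}^2$ and then intersect with $C_{a,0}$. A projective transformation given by a diagonal-or-monomial $3\times 3$ matrix $M$ has fixed locus on $\mathbb{P}^2$ equal to the union of the eigenspaces of $M$; since all our matrices are monomial of a very simple shape (diagonal, or a $2\times 2$ rotation block times a $1\times 1$ block), the eigenvectors are easy to write down. For each candidate fixed point $p$ I then impose the equation $x^4+y^4+z^4+ax^2y^2=0$ to see whether $p\in C_{a,0}$, and I record $|G_p|$ by checking which other elements of $G$ also fix $p$. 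Grouping the resulting points into $G$-orbits via $|Orb_G(p)|\,|G_p|=16$ (Remark~1.13) yields the stated orbit decomposition.

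First I would set up $G$ concretely: list its $16$ elements as words in $\sigma,\tau,\rho$, identify which are diagonal (the subgroup generated by $\sigma,\tau$, of order $8$) and which involve the swap $\rho$, and note that $\rho^2=\mathrm{diag}(-1,-1,1)\in\langle\sigma,\tau\rangle$. Then I would treat the diagonal elements first: a diagonal matrix $\mathrm{diag}(\lambda,\mu,1)$ with $\lambda\neq\mu$ fixes exactly the three coordinate points $[1:0:0],[0:1:0],[0:0:1]$, none of which lies on $C_{a,0}$; the only diagonal elements with a repeated eigenvalue (hence a fixed line) are $\pm\rho^2$-type and $\sigma$ itself ($\mathrm{diag}(-1,1,1)$ fixes the line $x=0$), whose intersection with the curve gives the points $[0:\delta:1]$ with $\delta^4+1=0$, i.e.\ $Orb_G[0:\delta:1]$. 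Symmetrically $\mathrm{diag}(1,-1,1)$ gives $[\delta:0:1]$, which lies in the same $G$-orbit via $\rho$. For the elements involving $\rho$: $\rho$ itself has eigenvalues $i,-i$ on the $xy$-block and $1$ on the $z$-slot, with eigenvectors $[1:\pm i:0]$ and $[0:0:1]$; checking against the curve picks out $[1:i:0]$-type points, which turn out to be $G$-conjugate to the $[\beta:1:0]$ family once one solves $x^4+ax^2+1=0$ on the line $z=0$. The genuinely two-dimensional fixed loci come from order-$2$ elements like $\sigma\rho$ or $\tau^2\rho$ whose matrices have a repeated eigenvalue with a $2$-dimensional eigenspace (a fixed line in $\mathbb{P}^2$); intersecting such a line with the quartic gives four points, and these assemble into $Orb_G[\alpha:\alpha:1]$ where $(a+2)\alpha^4+1=0$ (the factor $a+2$ entering precisely because the fixed line is $x=y$ or $x=-y$, on which $x^4+y^4+ax^2y^2=(2+a)x^4$ when $x=y$). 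Finally, the exceptional point $[i:1:1]$: I would check directly that it lies on $C_{a,0}$ only when $a=3$ — substituting gives $i^4+1+1+a\cdot i^2 = 3-a$ — and that at $a=3$ its stabilizer is larger than generic, so it forms its own orbit $Orb_G[i:1:1]$; for $a\neq3$ it is simply not on the curve and drops out.

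The main obstacle is bookkeeping rather than conceptual: one must be careful (i) not to miss any element of $G$ whose eigenspace structure produces a fixed \emph{line} (as opposed to isolated fixed points), since those are the ones contributing new Weierstrass-point families; (ii) to verify the nondegeneracy hypothesis $(a^2-1)(a^2-4)\neq0$ is exactly what is needed for the curve to be smooth and for the four roots of each quartic ($x^4+ax^2+1$, $(a+2)x^4+1$, $x^4+1$) to be distinct and to give honest orbits of the predicted size; and (iii) to recognize coincidences between the a~priori separate fixed-point sets of different group elements, so that the final list of orbits is irredundant. I would organize this as a table: one row per conjugacy class of $G$, columns for the matrix, its eigenspaces, the intersection with $C_{a,0}$, and the resulting stabilizer order, then read off $X(C_{a,0})=X_2(C_{a,0})\cup X_4(C_{a,0})$ (no point has stabilizer of order $8$ or $16$, since that would force the curve to contain a coordinate point or be singular) and collect the distinct orbits.
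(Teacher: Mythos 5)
Your overall strategy --- enumerate the elements of $G$, take eigenspaces as fixed loci in $\mathbb{P}^2$, intersect with $C_{a,0}$, and sort by stabilizers --- is exactly the computation the paper appeals to (it prints no proof of this lemma), but two of your concrete identifications are wrong, and one of them is a genuine gap. First, the isolated fixed points $[1:\pm i:0]$ of $\rho$ do \emph{not} lie on $C_{a,0}$: substituting gives $1+(\pm i)^4+a(\pm i)^2=2-a\neq 0$ since $a\neq 2$, so they cannot ``pick out'' anything and they are not $G$-conjugate to the $[\beta:1:0]$ family. That family comes instead from the line $z=0$, which is fixed pointwise (projectively) by $\rho^2=\tau^2=\mathrm{diag}(-1,-1,1)$ and by $\mathrm{diag}(\pm i,\pm i,1)$; intersecting $z=0$ with the quartic gives $\beta^4+a\beta^2+1=0$, and this is also why those four points have stabilizer of order $4$ rather than $2$. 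Likewise $\tau^2\rho=\rho^{-1}$ has order $4$, so it is not an example of an involution with a fixed line: the involutions in the coset of the diagonal subgroup are $(x,y,z)\mapsto(y,x,z)$, $(x,y,z)\mapsto(-y,-x,z)$ (fixed lines $y=\pm x$, giving $Orb_G[\alpha:\alpha:1]$), and the two \emph{twisted} swaps $(x,y,z)\mapsto(\pm iy,\mp ix,z)=\tau\sigma\rho,\ \tau^{3}\sigma\rho$ (fixed lines $y=\mp ix$).

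The gap is that your table omits these twisted swaps. On their fixed lines $y=\mp ix$ the quartic restricts to $(2-a)x^4+z^4$, so for every admissible $a$ they meet $C_{a,0}$ in the eight points $[\mu:\mp i\mu:1]$ with $(2-a)\mu^4+1=0$, each fixed by a nontrivial involution of $G$ and hence lying in $X(C_{a,0})$; they form a single $G$-orbit of $8$ points with $|G_p|=2$, disjoint from the three orbits you list. Your handling of $[i:1:1]$ by a single substitution, concluding it ``drops out'' for $a\neq 3$, is precisely where this family is lost: $[i:1:1]$ is the member $\mu=i$ of this family at $a=3$, and the family itself does not disappear for other $a$ --- only that particular representative leaves the curve. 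So a faithful completion of your own program does not deliver the orbit decomposition you assert; you must either carry $\tau\sigma\rho$ and $\tau^{3}\sigma\rho$ through the computation and reconcile the outcome with the stated list, or show why their fixed points should be discarded. A Riemann--Hurwitz count makes the tension concrete: with only $Orb_G[\beta:1:0]$ ($4$ points, $|G_p|=4$) and $Orb_G[\alpha:\alpha:1]$, $Orb_G[0:\delta:1]$ ($8$ points each, $|G_p|=2$) one gets $\sum_p\left(|G_p|-1\right)=28$, and $4=2g-2=16(2g'-2)+28$ has no integral solution for the quotient genus $g'$, whereas including the extra orbit gives $36$ and $g'=0$.
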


\begin{proposition}
For the quartics $C_{a,0},$ we have:
\newline\newline
$X(C_{a,0})\cap W_{1}(C_{a,0})=Orb_{G}[\beta
:1:0]\cup\left\{
\begin{array}
[c]{lr}%
Orb_{G}[0:\delta:1],\,\,\,\,\,\,\,\,\,\,\,\,\,\,\,\,\,\,\,\,\,\, if\,\,a=0 & \\\\
Orb_{G}[\alpha:\alpha:1],\,\,\,\,\,\,\,\,\,\,\,\,\,\,\,\,\,\,\,\,if\,\,a=6 & \\\\
\phi,\,\,\,\,\,\,\,\,\,\,\,\,\,\,\,\,\,\,\,\,\,\,\,\,\,\,\,\,\,\,\,\,\,\,\,\,\,\,\,\,\,\,\,\,\,\,\,\,\,\,\,\,\,\,
otherwise &
\end{array}
\right.  $
\end{proposition}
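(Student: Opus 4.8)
The plan is to intersect the list of points with nontrivial stabilizer, as described in Lemma~2.1, with the flex locus of $C_{a,0}$, using Lemma~1.6 (flexes $=$ zeros of the Hessian). By Lemma~2.1 the set $X(C_{a,0})$ is, up to the $G$-action, the union of $Orb_G[\beta:1:0]$, $Orb_G[\alpha:\alpha:1]$, $Orb_G[0:\delta:1]$, and (only when $a=3$) $Orb_G[i:1:1]$; since the $1$-Weierstrass points of a plane quartic are exactly the flexes (Lemma~1.7), and since by Lemma~1.9 the flex locus is $G$-invariant, it suffices to test a single representative of each orbit against the Hessian equation $H_F=0$, where $F=x^4+y^4+z^4+ax^2y^2$. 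So the first step is to compute $H_F$ explicitly: the second partials are $F_{xx}=12x^2+2ay^2$, $F_{yy}=12y^2+2ax^2$, $F_{zz}=12z^2$, $F_{xy}=4axy$, $F_{xz}=F_{yz}=0$, so
\[
H_F=12z^2\big((12x^2+2ay^2)(12y^2+2ax^2)-16a^2x^2y^2\big).
\]

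The second step is to evaluate $H_F$ at each representative and simplify using the defining relation of the relevant parameter. For $[\beta:1:0]$ one has $z=0$, hence $H_F=0$ identically; thus $Orb_G[\beta:1:0]\subseteq W_1(C_{a,0})$ unconditionally, which accounts for the first term in the claimed formula. For $[0:\delta:1]$ one substitutes $x=0$, $z=1$, $\delta^4=-1$; the bracketed factor becomes $(2a\delta^2)(12\delta^2)=24a\delta^4=-24a$, so $H_F=12\cdot(-24a)=-288a$, which vanishes precisely when $a=0$. For $[\alpha:\alpha:1]$ one substitutes $x=y=\alpha$, $z=1$, $(a+2)\alpha^4=-1$; the bracket becomes $(12+2a)^2\alpha^4-16a^2\alpha^4=\big((12+2a)^2-16a^2\big)\alpha^4=(12-2a)(12+6a)\alpha^4$, and using $(a+2)\alpha^4=-1$ this is a constant multiple of $(6-a)(a+2)/(a+2)\sim (6-a)$ up to nonzero factors, so $H_F$ vanishes at this point exactly when $a=6$. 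Finally, for the sporadic orbit at $a=3$, the representative $[i:1:1]$ gives $x^2=-1$, $y^2=1$, $z^2=1$; the bracket becomes $(-12+2a)(12-2a)-16a^2=-(12-2a)^2-16a^2=-144+48a-4a^2-16a^2$, which at $a=3$ equals $-144+144-180=-180\neq0$, so this orbit contributes no $1$-Weierstrass points.

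Assembling these computations: $Orb_G[\beta:1:0]$ always lies in $W_1(C_{a,0})$; $Orb_G[0:\delta:1]$ lies in $W_1$ iff $a=0$; $Orb_G[\alpha:\alpha:1]$ lies in $W_1$ iff $a=6$; and when $a=3$ the extra orbit $Orb_G[i:1:1]$ does not lie in $W_1$. Since the constraint $(a^2-1)(a^2-4)\neq 0$ excludes $a=\pm1,\pm2$, the three cases $a=0$, $a=6$, and "otherwise" are genuinely disjoint and exhaust all admissible values of $a$, giving exactly the stated formula. The main obstacle I anticipate is purely bookkeeping: one must be careful that the algebraic relations $\beta^4+a\beta^2+1=0$, $(a+2)\alpha^4+1=0$, $\delta^4+1=0$ are used to reduce the Hessian values to the correct polynomial in $a$, and that no spurious common factor of $H_F$ with $F$ is overlooked — but since we already know $p\in C_{a,0}$, evaluating $H_F$ after imposing the curve equation is legitimate, and the smoothness hypothesis $(a^2-1)(a^2-4)\neq0$ guarantees the representatives are honest points of the curve.
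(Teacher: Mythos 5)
Your proposal is correct and takes essentially the same route as the paper: evaluate the Hessian at a representative of each orbit of $X(C_{a,0})$ listed in Lemma 2.1 and use $G$-invariance of $W_1$; the only difference is that for $Orb_G[0:\delta:1]$ the paper uses a resultant of $F$ and $H_F$, whereas your direct evaluation $H_F(0,\delta,1)=-288a$ is simpler and equivalent. One harmless slip: at $[i:1:1]$ the term $-16a^2x^2y^2$ contributes $+16a^2$ (since $x^2y^2=-1$), so the bracket is $-(12-2a)^2+16a^2=12(a+6)(a-2)$, giving $H_F=144(a+6)(a-2)$ as in the paper, which is still nonzero at $a=3$, so your conclusion is unaffected.
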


\begin{proof}
The Hessian curve $H_F$ of $C_{a,0}$ is given by
\[H_F(x,y,z)=-144\left(\left(-12+a^2\right)x^2y^2-2a\left(x^4+y^4\right)\right)z^2,\]
consequently, $H_F(\beta,1,0)=0,$ that is \[Orb_G[\beta:1:0]\subset W_1(C_{a,0}).\] Also, the resultant of $F(x,y,1)$ and $H_F(x,y,1)$ with respect to $y$ is given by \[Res\left(H_F(x,y,1),F(x,y,1);y\right)=constant\left(h(x)\right)^2,\]
where
\[h(x):=4a^2+\left(144-48a^2+3a^4\right)x^4+\left(144-72a^2+9a^4\right)x^8.\]
Hence \[Orb_G[0:\delta:1]\subset W_1(C_{a,0})\,\,\,\, \text{if, and
only if,}\,\,\, a=0.\] On the other
hand,\[H_F[\alpha:\alpha:1]=-144(a-6)(a+2)\alpha^4,\] consequently,
\[Orb_G[\alpha:\alpha:1]\subset W_1(C_{a,0})\,\,\,\text{if, and only
if,}\,\,\,a=6.\] Moreover,
\[H_F[i:1:1]=144\left((a+6)(a-2)\right)\neq0,\] whenever $a=3,$ that
is,\,\, $Orb_G[i:1:1]\cap W_1(C_{a,0})=\phi.$

This completes the proof.
\end{proof}

\begin{lemma}
If $[\gamma:\zeta:1]\notin X(C_{a,0})$, then
\[
Orb_{G}[\gamma:\zeta:1]=\{[\pm\gamma:\pm\zeta:1],[\pm i\gamma:\pm
i\zeta:1],[\pm\zeta:\pm\gamma:1],[\pm i\zeta:\pm i\gamma:1]\},
\]
and
\[
|Orb_{G}[\gamma:\zeta:1]|=16.
\]

\end{lemma}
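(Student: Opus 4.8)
The plan is to compute $Orb_G[\gamma:\zeta:1]$ explicitly from the structure of $G$, and then to pin down its cardinality by combining the orbit--stabilizer identity $|Orb_G(p)|\,|G_p|=|G|=16$ with the hypothesis $[\gamma:\zeta:1]\notin X(C_{a,0})$. The starting point is that $\sigma,\tau,\rho$ all have third row and third column equal to those of the identity matrix, so each preserves the affine chart $z=1$, on which they act by
\[
\sigma:[\gamma:\zeta:1]\longmapsto[-\gamma:\zeta:1],\qquad
\tau:[\gamma:\zeta:1]\longmapsto[i\gamma:-i\zeta:1],\qquad
\rho:[\gamma:\zeta:1]\longmapsto[-\zeta:\gamma:1].
\]
First I would handle the diagonal subgroup $D:=\langle\sigma,\tau\rangle$: since $\sigma,\tau$ commute with $\sigma^2=\tau^4=1$, its elements are the eight matrices $\sigma^a\tau^b=\mathrm{diag}\bigl((-1)^a i^b,(-i)^b,1\bigr)$ with $0\le a\le1$, $0\le b\le3$, pairwise distinct already as matrices and hence in $\mathrm{PGL}_3(\mathbb{C})$, because their common $(3,3)$-entry $1$ forces any scalar factor to be $1$. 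Evaluating all eight on $[\gamma:\zeta:1]$ then gives
\[
D\cdot[\gamma:\zeta:1]=\{[\pm\gamma:\pm\zeta:1]\}\cup\{[\pm i\gamma:\pm i\zeta:1]\}.
\]

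Next I would observe that conjugation by $\rho$ interchanges the first two diagonal entries, and that the set of eight pairs occurring in $D$ is stable under this interchange, so $\rho$ normalizes $D$; together with $\rho^2=\tau^2\in D$ and $\rho\notin D$ this gives $G=\langle\sigma,\tau,\rho\rangle=D\sqcup D\rho$, in agreement with $|G|=16$. Consequently $Orb_G[\gamma:\zeta:1]=D\cdot[\gamma:\zeta:1]\cup D\cdot\bigl(\rho[\gamma:\zeta:1]\bigr)=D\cdot[\gamma:\zeta:1]\cup D\cdot[-\zeta:\gamma:1]$, and applying the displayed formula for a $D$-orbit with $(\gamma,\zeta)$ replaced by $(-\zeta,\gamma)$ (and absorbing the minus sign into the ``$\pm$'') yields $D\cdot[-\zeta:\gamma:1]=\{[\pm\zeta:\pm\gamma:1]\}\cup\{[\pm i\zeta:\pm i\gamma:1]\}$. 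Hence $Orb_G[\gamma:\zeta:1]$ coincides, as a set, with the list of sixteen expressions in the statement.

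Finally, since $[\gamma:\zeta:1]$ is a point of $C_{a,0}$ not lying in $X(C_{a,0})$, its stabilizer in $G$ is trivial by the definition of $X(C_{a,0})$, so the orbit--stabilizer identity forces $|Orb_G[\gamma:\zeta:1]|=16$; having already exhibited the orbit as sixteen expressions, we conclude that these sixteen points are pairwise distinct, which establishes both the displayed description of the orbit and the equality $|Orb_G[\gamma:\zeta:1]|=16$ at once. The step needing the most care is purely the finite bookkeeping in the first two paragraphs (checking that the eight matrices $\sigma^a\tau^b$ produce exactly the indicated sign/times-$i$ patterns, and that the coset $\langle\sigma,\tau\rangle\rho$ reindexes $(\gamma,\zeta)$ to $(-\zeta,\gamma)$); there is no conceptual obstacle, and in particular one never has to determine by hand which of the sixteen expressions might collide — that check, done directly, would amount to tracing the loci $\gamma=0$, $\zeta=0$, $\gamma=\pm\zeta$, $\gamma^2=-\zeta^2$ back to $X(C_{a,0})$, which the orbit--stabilizer argument makes unnecessary.
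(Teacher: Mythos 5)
Your proof is correct: the coset decomposition $G=\langle\sigma,\tau\rangle\sqcup\langle\sigma,\tau\rangle\rho$ gives exactly the sixteen listed expressions, and the orbit--stabilizer relation $|Orb_G(p)|\,|G_p|=16$ together with the triviality of $G_p$ for $p\notin X(C_{a,0})$ yields $|Orb_G[\gamma:\zeta:1]|=16$ (and hence the distinctness of the sixteen points). The paper states this lemma without proof, treating it as a routine consequence of the explicit generators and of the orbit--stabilizer remark in Section 1.2, and your argument is precisely the verification it leaves implicit, so there is nothing to add.
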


\begin{lemma}
If $a\neq0,6$, then there exists an ordinary flex point $[\gamma
:\zeta:1]\notin X(C_{a,0})$, such that
\[
W_{1}(C_{a,0})=Orb_{G}[\beta:1:0]\cup Orb_{G}[\gamma:\zeta:1].
\]
\end{lemma}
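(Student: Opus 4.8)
The plan is to avoid any new contact-order computation and instead read off the statement from the global count of $1$-Weierstrass points together with the results already established. Recall from Lemma 2.6 that $W_{1}(C_{a,0})$ is exactly the set of flexes, that every flex has $1$-weight $1$ (ordinary flex) or $2$ (hyperflex), and that the flexes of the smooth quartic $C_{a,0}$ carry total $1$-weight $24$. So the whole proof is a matter of distributing the number $24$ among the $G$-orbits of flexes.

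First I would pin down $Orb_{G}[\beta:1:0]$. Each generator $\sigma,\tau,\rho$ of $G$ fixes the line $z=0$, so $G$ acts on the (finitely many) points of $C_{a,0}$ on that line. Since $\beta\neq0$ and, using $a\neq\pm2$, the four roots $\beta,-\beta,\beta^{-1},-\beta^{-1}$ of the reciprocal polynomial $x^{4}+ax^{2}+1$ are pairwise distinct, these points are precisely $[\pm\beta:1:0]$ and $[\pm\beta^{-1}:1:0]$; a direct check with $\sigma$, $\rho$ and $\sigma\rho$ shows $G$ permutes them transitively. Hence $|Orb_{G}[\beta:1:0]|=4$, and by Lemma 2.7 all four points have the same $1$-gap sequence, hence a common $1$-weight $w_{\beta}\in\{1,2\}$.

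Next comes the weight bookkeeping. By Proposition 2.2, for $a\neq0,6$ we have $X(C_{a,0})\cap W_{1}(C_{a,0})=Orb_{G}[\beta:1:0]$ (at $a=3$ the extra orbit $Orb_{G}[i:1:1]\subset X(C_{a,0})$ contains no flex). Therefore every $1$-Weierstrass point outside $Orb_{G}[\beta:1:0]$ has trivial stabilizer, so by Lemma 2.3 its $G$-orbit has size $16$, and by Lemma 2.7 such an orbit contributes total weight $16$ or $32$ according as its points are ordinary flexes or hyperflexes. Writing $24=4w_{\beta}+R$, where $R$ is the total weight of the flexes outside $Orb_{G}[\beta:1:0]$ and $R$ is a sum of $16$'s and $32$'s, the choice $w_{\beta}=1$ would force $R=20$, which is impossible; hence $w_{\beta}=2$, $Orb_{G}[\beta:1:0]$ accounts for weight $8$, and $R=16$. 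But a sum of $16$'s and $32$'s equal to $16$ can only be a single $16$, so there is exactly one further orbit of flexes and it consists of sixteen ordinary flexes.

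Finally I would exhibit a representative in the required form. Every point of $C_{a,0}$ lying on a coordinate line belongs to $X(C_{a,0})$ — namely to $Orb_{G}[\beta:1:0]$ if $z=0$, to $Orb_{G}[0:\delta:1]$ if $x=0$, and to its image under the symmetry $x\leftrightarrow y$ if $y=0$ (Lemma 2.1). Since the size-$16$ orbit just found is disjoint from $X(C_{a,0})$, none of its points lies on a coordinate line, so each can be normalized as $[\gamma:\zeta:1]$ with $\gamma\zeta\neq0$ and $[\gamma:\zeta:1]\notin X(C_{a,0})$; such a point is an ordinary flex, and $W_{1}(C_{a,0})=Orb_{G}[\beta:1:0]\cup Orb_{G}[\gamma:\zeta:1]$. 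The only genuine work is the transitivity check yielding $|Orb_{G}[\beta:1:0]|=4$ and the short arithmetic forcing $w_{\beta}=2$ (equivalently, that $[\beta:1:0]$ is a hyperflex); I expect that arithmetic/orbit-count step to be the crux, since everything else is a direct invocation of Lemmas 2.1, 2.3, 2.6, 2.7 and Proposition 2.2.
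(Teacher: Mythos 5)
Your argument is correct, and it fills in what the paper actually leaves unproved: the paper states this lemma without proof and only runs the analogous count inside the proof of Theorem 2.6, where it cites Proposition 2.2 together with Lemma 2.5 (the assertion that points of $X(C_{a,0})\cap W_{1}(C_{a,0})$ are hyperflexes) and then concludes there is one orbit $1_{16}$ of ordinary flexes. Your route is the same weight-bookkeeping in spirit, but you do not assume Lemma 2.5; instead you let the arithmetic $24=4w_{\beta}+R$, with $R$ a sum of $16$'s and $32$'s, force $w_{\beta}=2$ and $R=16$, so the hyperflex nature of $Orb_{G}[\beta:1:0]$ comes out as a byproduct rather than as an input. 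You also supply two small verifications the paper glosses over and which are genuinely needed: that $|Orb_{G}[\beta:1:0]|=4$ (using $a\neq\pm2$ so that $\pm\beta,\pm\beta^{-1}$ are distinct), and that every flex outside $X(C_{a,0})$ avoids the line $z=0$, so the size-$16$ orbit statement (the paper's Lemma 2.3, stated only for points of the form $[\gamma:\zeta:1]$) really applies. Two cosmetic points: the results you call Lemmas 2.6 and 2.7 are the paper's Lemmas 1.6--1.8 (flex classification, the total weight $24$, and invariance of gap sequences under automorphisms), and the points with $y=0$ already lie in $Orb_{G}[0:\delta:1]$ itself since $\rho$ exchanges the first two coordinates; neither affects the argument.
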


\begin{lemma}
If $X(C_{a,0})\cap W_{1}(C_{a,b})\neq\phi$, then the intersection
points are necessarily hyperflex points.
\end{lemma}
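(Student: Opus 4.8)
The plan is to combine the count from Lemma 2.7 (the total number of 1-Weierstrass points, weighted, is $24$) with the orbit-size computations that are already available. First I would recall from Proposition 2.2 that the only points of $X(C_{a,0})$ that can lie in $W_1(C_{a,0})$ are those in $Orb_G[\beta:1:0]$ (always), together with $Orb_G[0:\delta:1]$ when $a=0$ and $Orb_G[\alpha:\alpha:1]$ when $a=6$. Since the statement of Lemma 2.11 concerns the situation $X(C_{a,0})\cap W_1(C_{a,0})\neq\phi$ beyond the generic locus, the relevant cases are exactly $a=0$ and $a=6$; in each of these an extra orbit of $X(C_{a,0})$ becomes flex. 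So I would fix $a\in\{0,6\}$ and show that the points of that extra orbit are hyperflexes, i.e. have $1$-weight $2$ rather than $1$.

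Second, I would pin down the orbit sizes. Using Remark 2.10, $|Orb_G(p)|\,|G_p|=16$. For a point $p\in X_2(C_{a,0})$ the orbit has size $8$, and for $p\in X_4(C_{a,0})$ it has size $4$; by Lemma 2.1 the stabilizer being nontrivial forces $p$ to lie on one of the coordinate-type loci, and I would identify which of $Orb_G[0:\delta:1]$, $Orb_G[\alpha:\alpha:1]$ sits in $X_2$ versus $X_4$ from the explicit matrices $\sigma,\tau,\rho$. The point $[\beta:1:0]$ lies on the line $z=0$ and one checks directly that its orbit has size $8$. Then I would run the weight accounting: $24 = \sum_{p\in W_1} \omega^{(1)}(p)$. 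The orbit $Orb_G[\beta:1:0]$ contributes $8$ if its points are ordinary flexes. In the generic case ($a\neq 0,6$) Lemma 2.10 gives $W_1 = Orb_G[\beta:1:0]\cup Orb_G[\gamma:\zeta:1]$ with the second orbit of size $16$ consisting of ordinary flexes, so $8+16=24$ and everything is consistent with weight $1$ throughout. In the special case $a=0$ or $a=6$, the second ``generic'' orbit must degenerate: part of it collides onto the special orbit in $X(C_{a,0})$, and by Lemma 2.8 (automorphisms preserve gap sequences) all points of that special orbit share a single $1$-gap sequence. If that orbit had only ordinary flexes, the weighted count coming from $Orb_G[\beta:1:0]$ plus a size-$8$ (or size-$4$) orbit of ordinary flexes plus whatever remains would not reconcile with $24$ together with the constraint that the remaining flexes again form full $G$-orbits of size dividing $16$; forcing the special orbit's points to be hyperflexes (weight $2$) is what balances the books.

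Concretely, for $a=6$: $Orb_G[\alpha:\alpha:1]$ has some size $s\in\{4,8\}$; the contributions are $8$ (from $[\beta:1:0]$, assuming those stay ordinary flexes, which I would verify from $H_F(\beta,1,0)=0$ together with the resultant computation showing $\beta$ is a simple intersection) plus $\omega\cdot s$ from the special orbit plus a multiple of $16$ (or $8$) from any further generic orbit. Solving $8 + \omega s + 16k = 24$ with $\omega\in\{1,2\}$, $s\in\{4,8\}$, $k\ge 0$ shows $\omega=2$ is forced once $s$ is determined; I would do the same arithmetic for $a=0$. The main obstacle I anticipate is the bookkeeping of exactly which generic orbits survive and with what multiplicity when the parameter specializes — i.e. making rigorous the claim that no ``new'' small orbit of ordinary flexes appears off $X(C_{a,0})$ in these cases. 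This likely requires re-examining the resultant $h(x)$ and the Hessian at $a=0,6$ to see that its flex locus outside $X(C_{a,0})$ consists of a single size-$16$ orbit (or is empty), so that the only slack in the weighted count of $24$ must be absorbed by upgrading the special orbit to hyperflexes. Once that structural input is in hand, the conclusion $\omega^{(1)}(p)=2$, i.e. $p$ is a hyperflex by Lemma 2.6, follows immediately.
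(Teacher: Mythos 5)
There is a genuine gap, and it begins with a misreading of the statement's scope. By Proposition 2.2 the intersection $X(C_{a,0})\cap W_1(C_{a,0})$ is \emph{never} empty: it always contains $Orb_G[\beta:1:0]$. So the lemma asserts in particular that the points of $Orb_G[\beta:1:0]$ are hyperflexes for every admissible $a$, and that is precisely how the lemma is used in the proof of Theorem 2.6 in the generic case $a\neq0,6$ (the $1_4$ of hyperflexes there is this orbit). Your proposal instead declares the relevant cases to be only $a=0,6$ and, worse, treats $Orb_G[\beta:1:0]$ as an orbit of $8$ ordinary flexes (``$8+16=24$, consistent with weight $1$ throughout''), which contradicts both the lemma you are proving and the table of Theorem 2.6. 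The orbit-size claim is also simply false: every generator of $G$ preserves the line $z=0$, and the subgroup $\{\mathrm{diag}(c,c,1):c^4=1\}=\{1,\tau^2,\sigma\tau,\sigma\tau^3\}\subset G$ fixes every point of that line, so $|G_{[\beta:1:0]}|\geq4$ and $|Orb_G[\beta:1:0]|\leq4$; equivalently, $z=0$ meets the quartic only in the four points $[\pm\beta:1:0],[\pm1/\beta:1:0]$. With the correct size the generic case is immediate and is the heart of the lemma: by Lemma 2.4, $W_1(C_{a,0})=Orb_G[\beta:1:0]\cup Orb_G[\gamma:\zeta:1]$ with the second orbit consisting of $16$ ordinary flexes, so $4\omega+16=24$ forces $\omega=2$, i.e.\ the intersection points are hyperflexes --- the opposite of what your accounting ``verifies''.

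For $a=0,6$ your instinct to balance the weighted count $24$ is the right one, but the step you flag as the main obstacle (re-examining $h(x)$ and the Hessian to control orbits off $X(C_{a,0})$) is unnecessary, and your arithmetic $8+\omega s+16k=24$ rests on the false contribution of $8$ from a size-$8$ orbit of ordinary flexes at $[\beta:1:0]$ (it lands on $\omega=2$ for the extra orbit only because $4\cdot2$ happens to equal $8\cdot1$, and it says nothing correct about the $\beta$-orbit itself, which the lemma also covers). The clean closing argument is: for $a\in\{0,6\}$, $X\cap W_1$ consists of $4+8=12$ points (Proposition 2.2), each of weight at least $1$; any further $1$-Weierstrass point lies outside $X(C_{a,0})$, hence in an orbit of exactly $16$ points (Lemma 2.3) all of the same weight, contributing at least $16$ more, and $12+16>24$. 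Therefore $W_1=X\cap W_1$, so the total weight $24$ is carried by $12$ points whose weights can only be $1$ or $2$ (the quartic weight table), forcing weight $2$ at every one of them. Note that the paper states this lemma without proof, so there is no written argument to compare with; but the counting just described is what its Theorem 2.6 implicitly relies on, and your proposal as written neither proves the generic case nor correctly closes the special ones.
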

\noindent\textbf{Notation.} Let $O_{r}$ \ be the orbits
classification, where $O$ denotes the number of orbits and $r$ the
number of points in these orbits. For example $2_{4}$ means: two
orbits each of 4 points. Now, our main result for the case $b=0$ is
the following.
\begin{theorem}
For the quartics $C_{a,0},$ the number of the 1-Weierstrass points of the quartics $C_{a,0}$ together with their geometry are given in the following table.

\begin{center}
  \textit{Number and Orbit Classification on $C_{a,0}$}%
\end{center}
\[%
\begin{tabular}
[c]{|c|c|c|}\hline
& \emph{Ordinary flexes} & \emph{Hyperflexes}\\\hline
$\ $ & $\ $ & $\mathbf{12}$\\
$a=0,6$ & $\mathbf{0}$ & $1_{4}$\\
&  & $1_{8}$\\\hline
\em{Otherwise} & $\mathbf{16}$ & $\mathbf{4}$\\
& $1_{16}$ & $1_{4}$\\\hline
\end{tabular}
\]
\\
where the boldface numbers denote the number of points.
\end{theorem}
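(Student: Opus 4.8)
The plan is to combine the structural results already assembled (Lemmas 2.1--2.5 together with Lemma 1.8 on the total 1-weight being $24$) into two exhaustive cases, $a\in\{0,6\}$ and $a\notin\{0,6\}$, and in each case pin down both the number of each type of flex and the orbit sizes.

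\medskip
\noindent\emph{Step 1: Assemble the ingredients.} First I would recall that, by Lemma 1.8 with $g=3$, the sum of all 1-weights equals $24$, that by Lemma 1.6 every 1-Weierstrass point on a smooth plane quartic is either an ordinary flex (weight $1$) or a hyperflex (weight $2$), and that by Lemma 1.7 the automorphism group $G$ (of order $16$) permutes $W_1(C_{a,0})$ while preserving gap sequences, so $W_1(C_{a,0})$ is a union of $G$-orbits on which the flex type is constant. From Remark 1.11, $|Orb_G(p)|\cdot|G_p|=16$, so every orbit has size $1,2,4,8$ or $16$; in particular orbits of points outside $X(C_{a,0})$ have size $16$ (Lemma 2.3), and orbits inside $X(C_{a,0})$ have size dividing $8$.

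\medskip
\noindent\emph{Step 2: The generic case $a\neq 0,6$.} Here Proposition 2.2 gives $X(C_{a,0})\cap W_1(C_{a,0})=Orb_G[\beta:1:0]$, and Lemma 2.2 (the Hessian computation) shows this orbit consists of flexes; one checks $|Orb_G[\beta:1:0]|$ equals $4$ (the four roots $\beta$ of $x^4+ax^2+1=0$ with $z=0$, the $G$-action on this line factoring through a group of order $4$), and by Lemma 2.5 these are hyperflexes, contributing weight $2\cdot 4=8$. Lemma 2.4 then supplies one further orbit $Orb_G[\gamma:\zeta:1]$ of ordinary flexes outside $X(C_{a,0})$, which by Lemma 2.3 has size exactly $16$, contributing weight $1\cdot 16=16$. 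Since $8+16=24$ exhausts the total from Lemma 1.8, there can be no other 1-Weierstrass points, giving precisely $16$ ordinary flexes in one orbit $1_{16}$ and $4$ hyperflexes in one orbit $1_4$.

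\medskip
\noindent\emph{Step 3: The special case $a=0$ or $a=6$.} By Lemma 2.4's hypothesis failing, and by Proposition 2.2, we now have $X(C_{a,0})\cap W_1(C_{a,0})=Orb_G[\beta:1:0]\cup Orb_G[0:\delta:1]$ when $a=0$, respectively $\cup\,Orb_G[\alpha:\alpha:1]$ when $a=6$; by Lemma 2.5 all of these are hyperflexes. I would compute the two orbit sizes: $Orb_G[\beta:1:0]$ has size $4$ as before, while $Orb_G[0:\delta:1]$ (resp. $Orb_G[\alpha:\alpha:1]$) has size $8$ — the stabilizer being a single nontrivial involution, consistent with $8\cdot 2=16$. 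That already accounts for weight $2\cdot(4+8)=24$, matching Lemma 1.8, so \emph{no} ordinary flexes and no further orbits can occur: the count is $0$ ordinary flexes and $12$ hyperflexes, split as $1_4\cup 1_8$. The case analysis is then complete and the table follows.

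\medskip
\noindent\emph{Main obstacle.} The conceptual structure is light; the real work — and the step most prone to error — is the exact determination of the orbit sizes, i.e. computing the stabilizers $G_p$ for the representative points $[\beta:1:0]$, $[0:\delta:1]$, $[\alpha:\alpha:1]$ under the explicit generators $\sigma,\tau,\rho$, and verifying that the points in a given orbit are genuinely distinct in $\mathbb{P}^2$ (so that the orbit has the claimed cardinality rather than a proper divisor of it). Once the sizes $4$, $8$, $16$ are confirmed, the weight bookkeeping against the total $24$ does all the remaining rigidity work and forces the absence of any unaccounted-for 1-Weierstrass points.
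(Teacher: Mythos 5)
Your proposal is correct and follows essentially the same route as the paper: identify $X(C_{a,0})\cap W_1(C_{a,0})$ via Proposition 2.2, use Lemma 2.5 to see these orbits are hyperflexes, and then use the orbit sizes together with Lemma 2.3 (and Lemma 2.4) and the total weight $24$ to account for the remaining ordinary flexes. The only difference is that you make the orbit-size computations and the weight bookkeeping explicit, which the paper leaves implicit in its citations of Lemmas 2.1--2.5.
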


\begin{proof}
Recall that the number of the 1-Weierstrass points is 24 counted
with their weights. Now, if $a=0$ or $a=6,$\, then by
\emph{Proposition 2.2} and \emph{Lemma 2.5}, the intersection
$X(C_{a,0})\cap W_{1}(C_{a,0})$ consists of $1_4$ and $1_8$ of
hyperflex points, which proves the case. If $a\neq0$ and $a\neq6$,
then again by \emph{Proposition 2.2} and \emph{Lemma 2.5},
$X(C_{a,0})\cap W_{1}(C_{a,0})$ consists of $1_4$ of hyperflex
points. Then it follows, by \emph{Lemma 2.3}, that there is $1_{16}$
of ordinary flex points.
\end{proof}

\subsection{Case $b\neq0$}

A group action of order $8$ on $C_{a,b}$ can be defined as follows.
Let $G_{1}\cong D_4$ be the projective transformation group
generated by the two elements $\sigma_{1}$ and $\tau_{1}$ of orders
$2$ and $4$ respectively, where
\[
\sigma_{1}:=\left(
\begin{array}
[c]{ccc}%
0 & 1 & 0\\
1 & 0 & 0\\
0 & 0 & 1
\end{array}
\right),\quad\tau_{1}:=\left(
\begin{array}
[c]{ccc}%
0 & 1 & 0\\
-1 & 0 & 0\\
0 & 0 & 1
\end{array}
\right).
\]
Now, computing the fixed points of the automorphisms of $G_1$ on
$C_{a,b}$ and their corresponding orbits gives rise to the following
result.

\begin{lemma}
For the quartics $C_{a,b}$ such that $b\neq0,$ we have:
\begin{eqnarray*}
X(C_{a,b})&=&Orb_{G_{1}}[\alpha_{1}:\alpha_{1}:1]\cup
Orb_{G_{1}}[\alpha_{3}:\alpha_{3}:1]\cup Orb_{G_{1}}[\beta:0:1]\cup\\
&&\cup\,Orb_{G_{1}}[\dfrac{1}{\beta}:0:1]\cup
Orb_{G_{1}}[\delta:1:0],
\end{eqnarray*}
where $\alpha_{1}$ and $\alpha_{3}$ are two distinct roots of the
equation $(a+2)x^{4}+2bx^{2}+1=0$ such that
$\alpha_{1}\neq\pm\alpha_{3},$ $\beta$ is a root of the equation
$x^{4}+bx^{2}+1=0$ and $\delta$ is a root of the equation
$x^{4}+ax^{2}+1=0.$
\end{lemma}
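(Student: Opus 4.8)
The plan is to compute the fixed-point loci of each nonidentity element of $G_1\cong D_4$ acting on $C_{a,b}$, then assemble the $G_1$-orbits of those fixed points. First I would list the elements of $G_1$: besides the identity we have the rotations $\tau_1,\tau_1^2,\tau_1^3$ and the four reflections, two of which are $\sigma_1$ and $\tau_1\sigma_1$ (swapping $x\leftrightarrow y$ up to sign) while $\tau_1^2$ acts as $[x:y:z]\mapsto[-x:-y:z]$. For each such projective transformation $g$ I would solve $g\cdot[x:y:z]=[x:y:z]$ in $\mathbb P^2$, which forces the point to lie on one of finitely many lines or coordinate points: for $\tau_1^2$ the fixed locus is the line $z=0$ together with the point $[0:0:1]$; for the reflections it is a pair of lines like $x=y$ and $x=-y$ (plus $[0:0:1]$); for $\tau_1,\tau_1^3$ (order $4$) the fixed points are the three isolated points $[0:0:1]$, $[1:\pm i:0]$ type points. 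Intersecting each of these fixed loci with the quartic $C_{a,b}$ and checking smoothness/nonsingularity (using $(a^2-4)(b^2-4)(a^2-1)(b^2-a-2)\neq0$ to rule out degeneracies and the point $[0:0:1]$ lying off the curve) produces the candidate points listed: $[\beta:0:1]$ and $[1/\beta:0:1]$ from the line $y=0$ intersected with $x^4+z^4+bx^2z^2=0$ (noting $\beta$ root of $x^4+bx^2+1=0$ makes $[\beta:0:1]$ lie on $C_{a,b}$ and that the product of roots is $1$, explaining the $1/\beta$), the points $[\alpha_j:\alpha_j:1]$ from $x=y$ intersected with $2x^4+z^4+ax^4+2bx^2z^2=0$, i.e. $(a+2)x^4+2bx^2+1=0$, and $[\delta:1:0]$ from $z=0$ intersected with $x^4+y^4+ax^2y^2=0$, i.e. $x^4+ax^2+1=0$ after dehomogenizing.

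Next I would verify that these points actually have nontrivial stabilizer (so they genuinely lie in $X(C_{a,b})$) and identify their stabilizer orders, and conversely that no other point of $C_{a,b}$ is fixed by any nonidentity element — this is just the statement that the union of fixed loci computed above, intersected with the curve, is exactly what is listed. One must also check that $x=-y$ contributes nothing new: points $[\alpha:-\alpha:1]$ satisfy the same quartic $(a+2)x^4+2bx^2+1=0$, but they lie in the same $G_1$-orbit as points of the form $[\alpha':\alpha':1]$ via $\tau_1^2$ or a reflection, so they are absorbed into $Orb_{G_1}[\alpha_1:\alpha_1:1]\cup Orb_{G_1}[\alpha_3:\alpha_3:1]$; similarly the line $x=0$ gives points $[0:\beta:1]$ which are carried to $[\beta:0:1]$ by $\sigma_1$. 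The hypotheses $(a+2)\neq0$ (from $b^2-a-2\neq0$ when... actually $a\neq-2$ is needed separately, but $a^2-4\neq0$ gives it) and $b^2-4\neq0$, $a^2-4\neq0$ guarantee the quartics $(a+2)x^4+2bx^2+1$, $x^4+bx^2+1$, $x^4+ax^2+1$ have four distinct nonzero roots, so the counts $\alpha_1\neq\pm\alpha_3$ make sense and the orbits are as claimed.

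The main obstacle I expect is the bookkeeping of orbit coincidences: one has to be careful that the four roots $\pm\alpha_1,\pm\alpha_3$ of $(a+2)x^4+2bx^2+1=0$ split into exactly two $G_1$-orbits $Orb_{G_1}[\alpha_1:\alpha_1:1]$ and $Orb_{G_1}[\alpha_3:\alpha_3:1]$ and not one or four, which requires knowing precisely how $\tau_1$ (order $4$) acts on a point of the form $[\alpha:\alpha:1]$ — it sends it to $[\alpha:-\alpha:1]\sim[-\alpha:\alpha:1]$, so the orbit of $[\alpha:\alpha:1]$ contains $[\pm\alpha:\pm\alpha:1]$, i.e. four points corresponding to the two roots $\pm\alpha$; hence $\{\alpha_1,-\alpha_1\}$ gives one orbit and $\{\alpha_3,-\alpha_3\}$ the other, consistent with the statement. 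The analogous check for $[\beta:0:1]$ versus $[1/\beta:0:1]$ hinges on whether $\beta$ and $1/\beta$ are related by a sign (they are not, in general, since $\beta\cdot(1/\beta)=1$ but $\beta=-1/\beta$ would force $\beta^2=-1$, excluded when $b\neq\pm2$), so these are genuinely two distinct orbits. Once the fixed-locus computation is done and these orbit identifications are settled, the lemma follows by taking the union. I would also remark that since each stabilizer is cyclic (Remark preceding the Notation) and $|G_1|=8$, the possible stabilizer orders are $2$, $4$, $8$; the points $[1:\pm i:0]$-type fixed by the order-$4$ rotations would give $X_4$ or $X_8$, but one checks these do not lie on $C_{a,b}$ under the genericity hypotheses, so in fact $X(C_{a,b})=X_2(C_{a,b})$ here, all listed orbits having size $|G_1|/2=4$ — though stating this precisely is not strictly required by the lemma as phrased.
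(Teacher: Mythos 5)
Your proposal follows essentially the same route the paper indicates (but does not spell out): compute the fixed loci of the nonidentity elements of $G_1\cong D_4$, intersect them with $C_{a,b}$, use the genericity hypotheses to keep the isolated fixed points (e.g.\ $[0:0:1]$, $[1:\pm i:0]$, $[1:\pm1:0]$) off the curve, and sort the resulting points into $G_1$-orbits; this is correct. Two small slips --- a single reflection in $\mathbb{P}^2$ fixes one line plus an isolated point (not a pair of lines), and $[\alpha:-\alpha:1]$ is not projectively equal to $[-\alpha:\alpha:1]$ --- do not affect the argument, since the total union of fixed lines $x=\pm y$, $x=0$, $y=0$, $z=0$ you work with is the right one and both points in question lie in the same orbit anyway.
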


\begin{remark}
\em{Each of the above orbits satisfies $|Orb_{G_{1}}P|=4.$ This
means that
\[
\,\,\,\,\,\,Orb_{G_{1}}\left[\alpha_{1,3}:\alpha_{1,3}:1\right]=\left\{\left[\pm\alpha_{1,3}:\pm\alpha
_{1,3}:1\right]\right\},
\]%
\[
\,\,\,\,\,Orb_{G_{1}}\left[\beta:0:1\right]=\left\{\left[\pm\beta:0:1\right],\left[0:\pm\beta:1\right]\right\},
\]%
\[
\,\,\,\,\,\,Orb_{G_{1}}[\dfrac{1}{\beta}:0:1]=\{[\pm\dfrac{1}{\beta}:0:1],[0:\pm\dfrac
{1}{\beta}:1]\},
\]%
\[
Orb_{G_{1}}\left[\delta:1:0\right]=\left\{\left[\pm\delta:1:0\right],\left[1:\pm\delta:0\right]\right\}.
\]
Moreover, if\,$[\zeta:\varepsilon:1]\notin X(C_{a,b})$,\, then $|Orb_{G_{1}}[\zeta:\varepsilon:1]|=8.$}
\end{remark}
\medskip
Now, the intersections $X(C_{a,b})\cap W_{1}(C_{a,b})$ are given by the following three lemmas.
\begin{lemma}
Let \[A:=Orb_{G_{1}}[\beta:0:1]\cup Orb_{G_{1}}[\dfrac{1}{\beta}:0:1],\] then for the quartics $C_{a,b}$ such that $b\neq0$, we have:
\[A\cap W_1(C_{a,b})\neq\phi\text{ }\\
\text{
\ \    }%
\mathrm{if,\,\,and\,\, only\,\, if,}\text{ }\,\,\, a^2+b^2-ab^2=0.\]
Moreover, \[A\cap W_{1}(C_{a,b})= \left\{
\begin{array}
[c]{l}%
Orb_{G_{1}}[\beta:0:1], \text{
\ \ \ \ \ \ \ \ \ \ \ \ }%
\text{\,if\,}\text{ }a=\dfrac{1}{2}(b^{2}-b\sqrt{b^{2}-4})\\\\
Orb_{G_{1}}[\dfrac{1}{\beta}:0:1], \text{
\ \ \ \ \ \ \ \  \ \ \ \  }%
if\text{ }a=\dfrac{1}{2}(b^{2}+b\sqrt{b^{2}-4})\\\\
\phi,\text{ \ \ \ \ \ \ \ \ \ \ \ \ \ \ \ \ \ \ \ \ \ \ \ \ \ \ \ \ \ }otherwise,%
\end{array}
\right.
\]
where $\beta=\dfrac{\sqrt{2}}{\sqrt{-b+\sqrt{b^{2}-4}}}.$

\end{lemma}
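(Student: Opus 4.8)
The plan is to combine the flex criterion (Lemmas 1.5 and 1.6) with the $G_1$-invariance of the Weierstrass locus (Lemma 1.8). A point $p$ on the smooth quartic $C_{a,b}$ lies in $W_1(C_{a,b})$ exactly when $H_F(p)=0$, and $W_1(C_{a,b})$ is $G_1$-stable, so each of the two orbits $Orb_{G_1}[\beta:0:1]$ and $Orb_{G_1}[\tfrac1\beta:0:1]$ making up $A$ either lies entirely in $W_1(C_{a,b})$ or is disjoint from it; hence it suffices to decide, for each representative, whether it is a flex. First I would record the second partials of $F=x^4+y^4+z^4+ax^2y^2+b(x^2+y^2)z^2$, namely $F_{xx}=12x^2+2ay^2+2bz^2$, $F_{yy}=12y^2+2ax^2+2bz^2$, $F_{zz}=12z^2+2b(x^2+y^2)$, $F_{xy}=4axy$, $F_{xz}=4bxz$, $F_{yz}=4byz$.

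Next I would specialise to points with $y=0$: there $F_{xy}=F_{yz}=0$, so the Hessian matrix is block diagonal and $H_F=F_{yy}\,(F_{xx}F_{zz}-F_{xz}^2)$. Evaluating at $[\beta:0:1]$ and reducing the bracket modulo the relation $\beta^4=-b\beta^2-1$ (valid since $\beta^4+b\beta^2+1=0$) should collapse it, giving
\[
F_{xx}F_{zz}-F_{xz}^2=(12\beta^2+2b)(12+2b\beta^2)-16b^2\beta^2=36\beta^2(4-b^2),
\]
which is nonzero because $\beta\neq0$ and $b^2\neq4$. Hence $H_F[\beta:0:1]=72\beta^2(4-b^2)(a\beta^2+b)$ vanishes if and only if $a\beta^2+b=0$, and the identical computation at $[\tfrac1\beta:0:1]$, whose square is the other root of $t^2+bt+1=0$, gives the condition $a\beta^{-2}+b=0$. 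The hard part will be exactly this collapse of $F_{xx}F_{zz}-F_{xz}^2$ to $36\beta^2(4-b^2)$; once it is in hand the remainder is elementary algebra.

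Finally I would translate these conditions into the stated form. If $a\beta^2+b=0$ then $a\neq0$ (otherwise $b=0$), so $\beta^2=-b/a$, and feeding this into $\beta^4+b\beta^2+1=0$ gives $b^2-ab^2+a^2=0$, i.e. $a^2+b^2-ab^2=0$; the condition $a\beta^{-2}+b=0$ yields the same equation, proving the first assertion. Conversely, $a^2+b^2-ab^2=0$ makes $a$ one of the two roots $\tfrac12(b^2\pm b\sqrt{b^2-4})$ of $a^2-b^2a+b^2=0$, and since $b(b^2-4)\neq0$ these are distinct, so at most one of the two orbits can lie in $W_1(C_{a,b})$. With the explicit value $\beta^2=\dfrac{2}{-b+\sqrt{b^2-4}}=\dfrac{-b-\sqrt{b^2-4}}{2}$, the relation $a=-b/\beta^2$ gives $a=\tfrac12(b^2-b\sqrt{b^2-4})$ for the orbit of $[\beta:0:1]$, while $a=-b\beta^2=\tfrac12(b^2+b\sqrt{b^2-4})$ for the orbit of $[\tfrac1\beta:0:1]$; in all remaining cases $A\cap W_1(C_{a,b})=\phi$. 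Since by Lemma 1.8 the entire orbit belongs to $W_1(C_{a,b})$ as soon as its representative does, this gives all the claimed equalities.
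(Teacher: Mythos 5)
Your proposal is correct, and it reaches the lemma by a somewhat different computational route than the paper. The paper also works with the Hessian on the slice $y=0$, but its device is elimination: it computes $\mathrm{Res}\bigl(H_F(x,0,1),F(x,0,1);x\bigr)=\mathrm{const}\cdot\bigl((b^2-4)^2(a^2+b^2-ab^2)\bigr)^2$, which yields the biconditional $A\cap W_1\neq\phi\Leftrightarrow a^2+b^2-ab^2=0$ in one stroke, and then substitutes the two values $a=\tfrac12(b^2\mp b\sqrt{b^2-4})$ into the system $F(x,0,1)=H_F(x,0,1)=0$ and solves to see which orbit occurs; in practice this leans on computer algebra. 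You instead evaluate the Hessian directly at the orbit representatives, exploiting that at $y=0$ the Hessian matrix is block diagonal, so $H_F=F_{yy}\,(F_{xx}F_{zz}-F_{xz}^2)$, and reduce modulo $\beta^4=-b\beta^2-1$; the collapse you flagged as the hard step does go through, since $(12\beta^2+2b)(12+2b\beta^2)-16b^2\beta^2=144\beta^2+24b\beta^4+24b-12b^2\beta^2=36\beta^2(4-b^2)$ after the substitution, giving $H_F[\beta:0:1]=72\beta^2(4-b^2)(a\beta^2+b)$ and likewise with $\beta^{-2}$ at $[\tfrac1\beta:0:1]$. Your route buys an explicit hand-checkable factorization that makes the role of the hypotheses $b\neq0$ and $b^2\neq4$ transparent, and it tags each orbit with its branch of $a$ directly via $a=-b/\beta^2$ versus $a=-b\beta^2$ (using $\beta^2=\tfrac{-b-\sqrt{b^2-4}}{2}$ and the distinctness of $\beta^{2}$ and $\beta^{-2}$ to get exclusivity); the paper's resultant argument is less elementary to verify by hand but delivers the vanishing condition $a^2+b^2-ab^2=0$ automatically, since a vanishing resultant is exactly the existence of a common root, and it is stylistically uniform with the proofs of the neighbouring lemmas. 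Both arguments rely, as you do, on Lemmas 1.5--1.6 (Weierstrass points of a smooth quartic are the flexes, detected by the Hessian) and on the $G_1$-invariance of $W_1$ (Lemma 1.8) to reduce to representatives, so your proof is a legitimate, slightly more self-contained variant of the paper's.
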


\begin{proof}
The Hessian curve $H_F(x,0,1)$ is given by
\begin{equation*}
H_F(x,0,1)=\left(2b+12x^2\right)\left(2b+2ax^2\right)\left(12+2bx^2\right)+4bx\left(-8b^2x-8abx^3\right),
\end{equation*}
and so the resultant of $H_F(x,0,1)$ and $F(x,0,1)$ with respect to $x$ is
\begin{equation*}
Res\left(H_F(x,0,1),F(x,0,1);x\right)=const.\left(g(a,b)\right)^2,
\end{equation*}
where
\begin{equation*}
g(a,b)=\left(b^2-4\right)^2\left(a^2+b^2-ab^2\right).
\end{equation*}
From the last equation, we have
\begin{equation*}
A\cap W_1(C_{a,b})\neq\phi \quad \text{\,if,\,\,and\,\,only\,\,if,\,}\quad a^2+b^2-ab^2=0.
\end{equation*}
Moreover, substituting $a=\dfrac{1}{2}(b^2+b\sqrt{b^2-4})$ into $%
H_F(x,0,1)=0=F(x,0,1)$ yields the system
\begin{equation*}
x^4+x^2+1=0,
\end{equation*}
\begin{equation*}
\left(2+bx^2+\sqrt{-4+b^2}x^2\right)\left(-12x^2+b^2x^2-2b(1+x^4)\right)=0,
\end{equation*}
which has the two solutions $\pm\dfrac{\sqrt{2}}{\sqrt{-b-\sqrt{b^2-4}}}.$
$\newline$
Also, substituting $a=\dfrac{1}{2}(b^2-b\sqrt{b^2-4})$ into $%
H_F(x,0,1)=0=F(x,0,1)$ yields the system
\begin{equation*}
x^4+x^2+1=0,
\end{equation*}
\begin{equation*}
\left(2+bx^2-\sqrt{-4+b^2}x^2\right)\left(-12x^2+b^2x^2-2b(1+x^4)\right)=0,
\end{equation*}
which again has the two roots
$\pm\dfrac{\sqrt{2}}{\sqrt{-b+\sqrt{b^2-4}}}$.
\end{proof}
\bigskip

\begin{lemma}
For the quartics $C_{a,b},$ such that $b\neq0$, the points of $Orb_{G_{1}}[\delta:1:0]$
are not 1-Weierstrass.

\end{lemma}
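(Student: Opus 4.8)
The plan is to lean on the structural facts recorded in the preliminaries: on a smooth plane quartic the $1$-Weierstrass points are exactly the flexes (\emph{Lemma 1.6}), and a point $p$ is a flex if and only if the Hessian form vanishes there, $H_F(p)=0$ (\emph{Lemma 1.5}). Since $G_{1}$ acts on $C_{a,b}$ by automorphisms and automorphisms carry $W^{(1)}(C_{a,b})$ to itself (\emph{Lemma 1.8}), it is enough to show that $H_F$ does not vanish at one representative of the orbit; I would take $[\delta:1:0]$, where $\delta$ is a root of $x^{4}+ax^{2}+1=0$, so that $F(\delta,1,0)=\delta^{4}+a\delta^{2}+1=0$ and the point indeed lies on $C_{a,b}$.

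First I would write down the six second partial derivatives of $F=x^{4}+y^{4}+z^{4}+ax^{2}y^{2}+b(x^{2}+y^{2})z^{2}$ and restrict the Hessian matrix to the line $z=0$. There $F_{xz}=F_{yz}=0$, so the Hessian matrix is block diagonal and a short expansion gives
\[
H_F(x,y,0)=24\,b\,(x^{2}+y^{2})\bigl(2a(x^{4}+y^{4})+(12-a^{2})x^{2}y^{2}\bigr),
\]
a form which is symmetric and even in $x$ and $y$.

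Next I would evaluate at $(x,y)=(\delta,1)$ and simplify using the defining relation $\delta^{4}+a\delta^{2}+1=0$, that is, $\delta^{4}+1=-a\delta^{2}$: the bracketed factor becomes $2a(-a\delta^{2})+(12-a^{2})\delta^{2}=3(4-a^{2})\delta^{2}$, hence $H_F(\delta,1,0)=72\,b\,(\delta^{2}+1)(4-a^{2})\delta^{2}$. It then remains only to check that no factor vanishes: $b\neq0$ by the standing hypothesis of this subsection; $4-a^{2}\neq0$ because $(a^{2}-4)(b^{2}-4)(a^{2}-1)(b^{2}-a-2)\neq0$; $\delta\neq0$, since $\delta=0$ would give $\delta^{4}+a\delta^{2}+1=1\neq0$; and $\delta^{2}+1\neq0$, since $\delta^{2}=-1$ substituted into the defining relation would force $a=2$, contradicting $a^{2}-4\neq0$. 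Thus $H_F(\delta,1,0)\neq0$, so $[\delta:1:0]$ is not a flex and hence not a $1$-Weierstrass point; by \emph{Lemma 1.8} (or directly, since the symmetric even form $H_F(x,y,0)$ takes the same nonzero value at each of the four points $[\pm\delta:1:0]$, $[1:\pm\delta:0]$ of \emph{Remark 2.8}) the whole orbit $Orb_{G_{1}}[\delta:1:0]$ is disjoint from $W^{(1)}(C_{a,b})$.

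The whole argument is a routine symbolic computation, so I do not anticipate a genuine obstacle; the only points needing care are pinning down the constant-free factorisation of $H_F(x,y,0)$ and not overlooking the two degenerate sub-cases $\delta=0$ and $\delta^{2}+1=0$, both of which are ruled out by the standing hypothesis on $(a,b)$.
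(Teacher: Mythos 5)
Your proof is correct and follows essentially the same route as the paper: both restrict the Hessian criterion (\emph{Lemmas 1.5, 1.6}) to the line $z=0$ and show $H_F$ cannot vanish at a point of $C_{a,b}$ there, the only difference being that you substitute the relation $\delta^{4}+a\delta^{2}+1=0$ directly and factor out $72\,b\,(\delta^{2}+1)(4-a^{2})\delta^{2}$, whereas the paper computes $Res\left(H_F(x,1,0),F(x,1,0);x\right)=\text{const}\cdot\left((a-2)^{3}(a+2)^{2}b^{2}\right)^{2}\neq0$. Your explicit check of the degenerate factors ($\delta\neq0$, $\delta^{2}+1\neq0$) and the use of \emph{Lemma 1.8} to pass to the whole orbit are sound.
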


\begin{proof}
The Hessian $H_F$ at the point $[x:1:0]$ is given by
\begin{equation*}
H_F(x,1,0)=2b(1+x^2)\left(-16a^2x^2+\left(2a+12x^2\right)\left(12+2ax^2\right)\right),
\end{equation*}
so the resultant with $F(x,1,0)$ is given as
\begin{equation*}
Res\left(H_F(x,1,0),F(x,1,0),x\right)=constant.\left(g(a,b)\right)^2,
\end{equation*}
where
\begin{equation*}
g(a,b)=(a-2)^3(a+2)^2b^2\neq0,
\end{equation*}
we have done.
\end{proof}

\begin{lemma}
Let \[B:=Orb_{G_{1}}[\alpha_{1}:\alpha_{1}:1]\cup Orb_{G_{1}}[\alpha_{3}%
:\alpha_{3}:1],\]
then for the quartics $C_{a,b}$ such that $b\neq0,$ we have:
\[B\cap W_{1}(C_{a,b})\neq\phi\text{ }\\
\text{
\ \    }%
\mathrm{if,\,\,and\,\,only\,\,if,\,}\text{ }\,\,\,36-12a+a^{2}-10b^{2}+3ab^{2}=0.\]
Moreover,
\[B\cap W_{1}(C_{a,b})=
\left\{
\begin{array}
[c]{l}%
Orb_{G_{1}}[\alpha_{1}:\alpha_{1}:1],
\text{
\ \ \ \ \ \ \ \ \ \ \ \ \ \ \ }%
\text{if}\text{ }a=\dfrac{1}{2}(12-3b^{2}-\sqrt{9b^{2}-32})\\\\
Orb_{G_{1}}[\alpha_{3}:\alpha_{3}:1],
\text{
\ \ \ \ \ \ \ \ \ \ \ \ \ \ \ }%
\text{if}\text{ }a=\dfrac{1}{2}(12-3b^{2}+\sqrt{9b^{2}-32})\\\\
\phi\text{\,\,\,\,\,\,\,\,\,\,\,\,\,\,\,\,\,\,\,\,\,\,\,\,\,\,\,\,\,\,\,\,\,\,\,\,\,\,\,\,\,\,\,\,\,\,\,\,\,\,\,\,\,\,\,\,\,\,\,\,\,\,\,\,\,\,\,\,\,\,\,\,\,\,\,\, }%
\text{otherwise,}\text{ }
\end{array}
\right.
\]
where
\[
\alpha_{1}=\dfrac{1}{4}\sqrt{\sqrt{9b^{2}-32}-3b}\,\,\,\textrm{and}\,\,\,\alpha_{3}=\dfrac{i}{4}\sqrt{\sqrt{9b^{2}-32}+3b}.
\]
\end{lemma}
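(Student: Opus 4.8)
The plan is to follow the template of the proof of Lemma~2.9 (the analogue for $A$), now restricting to the diagonal line $\{x=y\}$, which contains both orbit representatives $[\alpha_{1}:\alpha_{1}:1]$ and $[\alpha_{3}:\alpha_{3}:1]$. On this line $F(x,x,1)=(a+2)x^{4}+2bx^{2}+1$, so the points of $C_{a,b}$ on $\{x=y\}$ are exactly the $[x:x:1]$ with $(a+2)x^{4}+2bx^{2}+1=0$ (the point $[1:1:0]$ lies off $C_{a,b}$ since $a\neq-2$), and by Remark~2.8 two of them lie in $Orb_{G_{1}}[\alpha_{1}:\alpha_{1}:1]$ and two in $Orb_{G_{1}}[\alpha_{3}:\alpha_{3}:1]$. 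Because being a $1$-Weierstrass point, i.e.\ a flex, is constant on $G_{1}$-orbits (Lemma~1.8), it suffices to decide, for each representative, whether the Hessian $H_{F}$ vanishes there (Lemmas~1.5 and~1.6). Evaluating the $3\times 3$ Hessian determinant of $F$ at $(x,x,1)$ --- where $F_{xx}=F_{yy}$ and $F_{xz}=F_{yz}$, so that subtracting the second row from the first pulls out a factor --- a direct calculation gives
\[
H_{F}(x,x,1)=48\big((6-a)x^{2}+b\big)\big((a+2)b\,x^{4}+(6+3a-b^{2})x^{2}+b\big),
\]
which for $b=0$ reduces to the known $144(6-a)(a+2)x^{4}$ for $C_{a,0}$, a useful check.

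Next, exactly as in Lemma~2.9, I would form the resultant $Res\big(H_{F}(x,x,1),F(x,x,1);x\big)$. Both polynomials are even in $x$, so this equals a nonzero constant times the square of the resultant, in the variable $u=x^{2}$, of the cubic $h(u):=\big((6-a)u+b\big)\big((a+2)b\,u^{2}+(6+3a-b^{2})u+b\big)$ against $f(u):=(a+2)u^{2}+2bu+1$. The quadratic factor of $h$ equals $b\,f(u)-3(b^{2}-a-2)u$; since $f(0)=1\neq 0$ and $b^{2}-a-2\neq 0$ by hypothesis, it shares no root with $f$, so its resultant against $f$ is a nonzero constant (a monomial in $a+2$ and $b^{2}-a-2$). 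The linear factor $(6-a)u+b$ has the single root $u_{\ast}=b/(a-6)$, and substituting it into $f$ and clearing $(a-6)^{2}$ produces exactly $a^{2}-12a+36-10b^{2}+3ab^{2}$. Hence $Res\big(H_{F}(x,x,1),F(x,x,1);x\big)$ is a nonzero constant times $\big((a+2)(b^{2}-a-2)^{2}(36-12a+a^{2}-10b^{2}+3ab^{2})\big)^{2}$, and it vanishes --- equivalently $B\cap W_{1}(C_{a,b})\neq\phi$ --- if, and only if, $36-12a+a^{2}-10b^{2}+3ab^{2}=0$.

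For the ``moreover'' part, assume this cubic relation and solve it as a quadratic in $a$; its discriminant is $b^{2}(9b^{2}-32)$, whence the radical $\sqrt{9b^{2}-32}$, and its two roots are the two values of $a$ listed in the statement. For each of them the common root of $h$ and $f$ is $u_{\ast}=b/(a-6)$, and rationalizing by the conjugate radical turns $u_{\ast}$ into $\tfrac{1}{16}\big(\sqrt{9b^{2}-32}-3b\big)=\alpha_{1}^{2}$ for the first value of $a$ and into $-\tfrac{1}{16}\big(\sqrt{9b^{2}-32}+3b\big)=\alpha_{3}^{2}$ for the second. The roots $\alpha_{1}^{2},\alpha_{3}^{2}$ of $f$ are distinct, since $f$ has a repeated root only when $b^{2}=a+2$, which is excluded; hence in each case precisely the representative whose square equals $u_{\ast}$ is a flex and the other is not, and by Lemma~1.8 this passes to its whole orbit. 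This gives the asserted description of $B\cap W_{1}(C_{a,b})$. I expect the only genuinely delicate points to be the Hessian-on-the-diagonal computation (getting the factorization right) and the sign bookkeeping matching each root of the $a$-quadratic to the prescribed radical expression for $\alpha_{1}$ or $\alpha_{3}$; the resultant step itself is a routine variant of what is carried out in Lemma~2.9.
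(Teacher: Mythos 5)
Your proposal is correct in substance and follows essentially the same route as the paper: the paper likewise works on the diagonal with $F(x,x,1)=(2+a)x^4+2bx^2+1$ and $H_F(x,x,1)$, takes the resultant $Res\left(H_F(x,x,1),F(x,x,1);x\right)=const.\left((a+2)(2+a-b^2)^2(36-12a+a^2-10b^2+3ab^2)\right)^2$ to get the if-and-only-if condition, and then substitutes the special values of $a$ and solves; your factorization $H_F(x,x,1)=48\left((6-a)x^2+b\right)\left((a+2)bx^4+(6+3a-b^2)x^2+b\right)$ (which checks out, and agrees with the paper's unfactored expression), the factor-by-factor resultant, and the common root $u_{\ast}=b/(a-6)$ simply carry out by hand what the paper delegates to Mathematica.

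One point needs fixing, though: with your (correct) discriminant $b^2(9b^2-32)$, the roots of $Q(a,b)=0$ as a quadratic in $a$ are $a=\frac{1}{2}\left(12-3b^2\pm b\sqrt{9b^2-32}\right)$, which are \emph{not} literally ``the two values of $a$ listed in the statement'' --- the printed formulas omit the factor $b$ on the radical. Your rationalization of $u_{\ast}$ to $\alpha_1^2=\frac{1}{16}\left(\sqrt{9b^2-32}-3b\right)$, resp.\ $\alpha_3^2=-\frac{1}{16}\left(\sqrt{9b^2-32}+3b\right)$, goes through only for the $b$-corrected values (and the paper's own example $a=b=3$, where $Q(3,3)=0$, satisfies the corrected formula but not the printed one), so you should state the corrected values explicitly rather than defer to the statement; with that emendation, together with a one-line remark disposing of the degenerate case $a=6$ (there the linear factor is the nonzero constant $b$ and $Q(6,b)=8b^2\neq0$), your argument is complete.
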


\begin{proof}
The resultant of $H_F(x,x,1)$ and $F(x,x,1)$ with respect to $x$ is given by
\begin{equation*}
Res\left(H_F(x,x,1),F(x,x,1),x\right)=constant.\left(g(a,b)\right)^2,
\end{equation*}
where
\begin{equation*}
g(a,b)=\left(a+2\right)\left(2+a-b^2\right)^2\left(36-12a+a^2-10b^2+3a^2\right).
\end{equation*}
Thus%
\begin{equation*}
B\cap W_1(C_{a,b})\neq\phi\,\,\, \text{if,\,\,and\,\,only\,\,if,\,}\,\,\, 36-12a+a^2-10b^2+3ab^2=0.
\end{equation*}
Moreover, we have
\begin{equation*}
F(x,x,1)=(2+a)x^4+2bx^2+1,
\end{equation*}
\begin{equation*}
H_F(x,x,1)=-64b^2x^2\big(b-(-6+a)x^2\big)+16(3+bx^2)\big(-4a^2x^4+\big(%
b+(6+a)x^2\big)^2\big).
\end{equation*}
Thus, substituting $a=\dfrac{1}{2}(12-3b^2\pm\sqrt{9b^2-32})$ in the above system and solving, the required result is obtained.
\end{proof}

\noindent\textbf{Notation.} Let $\Gamma$ be the set of common zeros of the equations
\[
a^{2}+b^{2}-ab^{2}=0\quad \it{and}\quad36-12a+a^{2}-10b^{2}+3ab^{2}=0.
\]

Now the next result is an immediate consequence of the previous
three lemmas.
\begin{proposition}
If $b\neq0$, then for the quartics $C_{a,b},$ we have the following two cases:\\
\vspace*{-.32cm}\textbf{$I.$} $\mathrm{If}\text{ }(a,b)\in\Gamma$,
then
\[X(C_{a,b})\cap W_{1}(C_{a,b})= Orb_{G_{1}}[\alpha:\alpha:1]\cup
Orb_{G_{1}}[\beta_{0}:0:1],\] where
$\alpha\in\{\alpha_{1},\alpha_{3}\},\,\,\beta_{0}\in\{\beta,\dfrac{1}{\beta
}\}.$\\\\
\textbf{$II.$} $\mathrm{If}\text{ }(a,b)\notin\Gamma$, then
\[
X(C_{a,b})\cap W_{1}(C_{a,b})=\left\{
\begin{array}
[c]{l}%
Orb_{G_{1}}[\beta:0:1],\text{
\ \ \ \ \ \ \ \ \ \ \ \ \ \ }%
\text{if}\text{ }a=\dfrac{1}{2}(b^{2}-b\sqrt{b^{2}%
-4})\\\\
Orb_{G_{1}}[\dfrac{1}{\beta}:0:1],\text{
\ \ \ \ \ \ \ \ \ \ \ \ \ }%
\text{if}\text{ } a=\dfrac{1}{2}(b^{2}%
+b\sqrt{b^{2}-4})\\\\
Orb_{G_{1}}[\alpha_{1}:\alpha_{1}:1],\text{
\ \ \ \ \ \ \ \ \ \ \ }%
\text{if}\text{ } a=\dfrac{1}{2}%
(12-3b^{2}-\sqrt{9b^{2}-32})\\\\
Orb_{G_{1}}[\alpha_{3}:\alpha_{3}:1],\text{
\ \ \ \ \ \ \ \ \ \ \ }%
\text{if}\text{ } a=\dfrac{1}{2}%
(12-3b^{2}+\sqrt{9b^{2}-32})\\\\
\phi,\,\,\,\quad\quad\quad\quad\quad\quad\quad\,\,\,\,\,\,\,\,\,\quad\quad \text{otherwise}\text{ }.
\end{array}
\right.
\]
\end{proposition}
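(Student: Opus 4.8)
The plan is to assemble Proposition 2.13 directly from Lemmas 2.10, 2.11, and 2.12, which already partition the description of $X(C_{a,b})$ from Lemma 2.8 into the three pieces $A = Orb_{G_1}[\beta:0:1]\cup Orb_{G_1}[\tfrac{1}{\beta}:0:1]$, the single orbit $Orb_{G_1}[\delta:1:0]$, and $B = Orb_{G_1}[\alpha_1:\alpha_1:1]\cup Orb_{G_1}[\alpha_3:\alpha_3:1]$. First I would note that Lemma 2.8 gives $X(C_{a,b}) = A \cup Orb_{G_1}[\delta:1:0] \cup B$, so that $X(C_{a,b})\cap W_1(C_{a,b}) = \big(A\cap W_1\big) \cup \big(Orb_{G_1}[\delta:1:0]\cap W_1\big) \cup \big(B\cap W_1\big)$. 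By Lemma 2.11 the middle term is always empty, so the intersection reduces to $(A\cap W_1)\cup(B\cap W_1)$, and it remains only to combine the conditional descriptions of these two sets from Lemmas 2.10 and 2.12.

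Next I would treat the two cases according to whether $(a,b)\in\Gamma$. If $(a,b)\in\Gamma$, then by definition of $\Gamma$ both defining equations $a^2+b^2-ab^2=0$ and $36-12a+a^2-10b^2+3ab^2=0$ hold simultaneously; hence by the "if and only if" clauses of Lemma 2.10 and Lemma 2.12 we have $A\cap W_1 \neq \phi$ and $B\cap W_1 \neq \phi$. In this situation the refined descriptions in those two lemmas force $A\cap W_1$ to be exactly one of $Orb_{G_1}[\beta:0:1]$ or $Orb_{G_1}[\tfrac{1}{\beta}:0:1]$ (i.e.\ $Orb_{G_1}[\beta_0:0:1]$ with $\beta_0\in\{\beta,\tfrac{1}{\beta}\}$), and $B\cap W_1$ to be exactly one of $Orb_{G_1}[\alpha_1:\alpha_1:1]$ or $Orb_{G_1}[\alpha_3:\alpha_3:1]$ (i.e.\ $Orb_{G_1}[\alpha:\alpha:1]$ with $\alpha\in\{\alpha_1,\alpha_3\}$). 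Taking the union yields exactly the displayed formula in part $I$.

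For part $II$, if $(a,b)\notin\Gamma$ then the two defining equations of $\Gamma$ cannot both vanish, so at most one of $A\cap W_1$, $B\cap W_1$ is nonempty. If $a^2+b^2-ab^2=0$ but $36-12a+a^2-10b^2+3ab^2\neq0$, then $B\cap W_1=\phi$ and the answer is given by the two-way split of Lemma 2.10 according to whether $a = \tfrac{1}{2}(b^2-b\sqrt{b^2-4})$ or $a=\tfrac{1}{2}(b^2+b\sqrt{b^2-4})$; symmetrically, if $36-12a+a^2-10b^2+3ab^2=0$ but $a^2+b^2-ab^2\neq0$, then $A\cap W_1=\phi$ and Lemma 2.12 supplies the answer via the split on $a=\tfrac{1}{2}(12-3b^2\mp\sqrt{9b^2-32})$; and if neither equation vanishes, both intersections are empty and the total is $\phi$. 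Collecting these four subcases gives the five-line formula of part $II$. The only subtlety worth a remark is that these subcases are genuinely mutually exclusive and exhaustive once $(a,b)\notin\Gamma$ — one should check that the four exceptional values of $a$ cannot coincide in a way that produces both a horizontal and a diagonal orbit simultaneously, but this is precisely prevented by $(a,b)\notin\Gamma$, since any such coincidence would make both polynomials vanish. I do not expect a serious obstacle here: the proposition is essentially bookkeeping over the three preceding lemmas, and the main thing to get right is the clean case division on membership in $\Gamma$.
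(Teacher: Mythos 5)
Your proposal is correct and follows the same route as the paper, which simply states that the proposition is an immediate consequence of the three preceding lemmas (the ones describing $A\cap W_{1}$, $Orb_{G_{1}}[\delta:1:0]\cap W_{1}$, and $B\cap W_{1}$); you have merely spelled out the bookkeeping, including the case split on membership in $\Gamma$, which the paper leaves implicit.
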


\begin{lemma}
If $X(C_{a,b})\cap W_{1}(C_{a,b})\neq\phi$, then the intersection points are necessarily hyperflex points.
\end{lemma}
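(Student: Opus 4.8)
The plan is to invoke Proposition 2.12 to reduce the assertion to a contact-order computation at finitely many explicit points, and at each of them to exploit the order-$2$ projective symmetry that fixes it. By Proposition 2.12 the set $X(C_{a,b})\cap W_1(C_{a,b})$ is, in every case, a union of some of the four orbits $Orb_{G_1}[\beta:0:1]$, $Orb_{G_1}[\tfrac{1}{\beta}:0:1]$, $Orb_{G_1}[\alpha_1:\alpha_1:1]$ and $Orb_{G_1}[\alpha_3:\alpha_3:1]$. Since all points of one $G_1$-orbit have the same $1$-gap sequence (Lemma 1.8), hence the same flex type (Lemma 1.6), and since for a smooth plane quartic $W_1$ is precisely the set of flexes, it suffices to show that \emph{whenever one of the four representatives $p$ — namely $[\beta:0:1]$, $[\tfrac{1}{\beta}:0:1]$, $[\alpha_1:\alpha_1:1]$ or $[\alpha_3:\alpha_3:1]$ — is a flex, it is a hyperflex}. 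By Definition 1.4 and the B\'ezout bound $I_p(C_{a,b},L_p)\le\deg C_{a,b}=4$, a flex of a smooth plane quartic is an ordinary flex exactly when $I_p(C_{a,b},L_p)=3$ and a hyperflex exactly when $I_p(C_{a,b},L_p)=4$; so it is enough to prove $I_p(C_{a,b},L_p)\ne3$ for each of these four points.

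To this end, observe that each such $p$ is fixed by an involution $g\in G_1$ that leaves the defining form $F$ of $C_{a,b}$ invariant: the points $[\beta:0:1]$ and $[\tfrac{1}{\beta}:0:1]$ are fixed by $g:=\tau_1\sigma_1=\mathrm{diag}(1,-1,1)$, and $[\alpha_i:\alpha_i:1]$ is fixed by $g:=\sigma_1$. Being a projective automorphism of $C_{a,b}$ fixing $p$, such a $g$ preserves the tangent line $L_p$; the crucial point is that $g$ then acts on $L_p\cong\mathbb P^1$ as a \emph{nontrivial} involution (verified below). Granting this, choose homogeneous coordinates $[s:t]$ on $L_p$ with $p=[1:0]$ in which $g$ acts as $[s:t]\mapsto[s:-t]$ — possible because a nontrivial projective involution fixing $p$ has exactly one further fixed point. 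Then the binary quartic $F|_{L_p}$ is $g$-invariant, hence even in $t$: $F|_{L_p}=c_0s^4+c_2s^2t^2+c_4t^4$ with $c_0=0$ (as $p\in C_{a,b}$) and $(c_2,c_4)\ne(0,0)$ (as $L_p\not\subset C_{a,b}$). Therefore $F|_{L_p}=t^2(c_2s^2+c_4t^2)$, so $I_p(C_{a,b},L_p)=2$ when $c_2\ne0$ and $=4$ when $c_2=0$; in neither case is it $3$. (A direct check confirms this: at $[\beta:0:1]$ one gets $F|_{L_p}=t^2\bigl(t^2+A^2(a\beta^2+b)s^2\bigr)$ with $A=\beta(2\beta^2+b)\ne0$, so this point is a flex iff $a\beta^2+b=0$ and then a hyperflex, and the remaining points are analogous.) Combining this with Lemma 1.8 — which spreads the conclusion from each representative to its whole $G_1$-orbit — and with Proposition 2.12, which guarantees those orbits exhaust $X(C_{a,b})\cap W_1(C_{a,b})$, completes the proof.

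The only step requiring real care — and the one using the nondegeneracy hypothesis $(a^2-4)(b^2-4)(a^2-1)(b^2-a-2)\ne0$ — is the claim that $g|_{L_p}$ is nontrivial. Each $g$ above is a homology, fixing pointwise exactly one line (the line $\{y=0\}$ when $g=\mathrm{diag}(1,-1,1)$, and $\{x=y\}$ when $g=\sigma_1$) together with a single point off it, and $g|_{L_p}$ is trivial precisely when $L_p$ coincides with that fixed line. Computing $L_p$ from $\nabla F(p)$ gives, for instance, $L_p:\ \beta(2\beta^2+b)\,x+(2+b\beta^2)\,z=0$ at $[\beta:0:1]$, and lines of analogous shape at the other three points; a short calculation shows that the relevant coefficients are nonzero — using $b^2\ne4$ for the two points with a zero coordinate and $a\ne-2$, $b^2\ne a+2$ for the two points on the diagonal — so $L_p$ is never the corresponding fixed line. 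Everything else (writing out $L_p$, the orbit-size statement of Remark 2.8, and the final bookkeeping) is routine.
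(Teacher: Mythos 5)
Your proposal is correct, but note that the paper itself states this lemma (and its $b=0$ analogue, Lemma 2.5) with no proof at all, so there is nothing to match it against; what you have written fills a genuine gap. Your route — reduce via the Proposition and Lemma 1.8 to the four orbit representatives, observe that each is fixed by an involution of $G_1$ preserving $F$ (namely $\tau_1\sigma_1=\mathrm{diag}(1,-1,1)$ for $[\beta:0:1]$, $[\tfrac1\beta:0:1]$ and $\sigma_1$ for $[\alpha_i:\alpha_i:1]$), and conclude that $F|_{L_p}$ is even in the coordinate vanishing at $p$, so $I_p(C_{a,b},L_p)\in\{2,4\}$ and never $3$ — is sound, and it has the added benefit of recovering, independently of the Hessian resultants, the flex conditions $a\beta^2+b=0$ and $(6-a)\alpha^2+b=0$ that underlie Lemmas 2.9 and 2.11 (I checked these are equivalent to $a^2+b^2-ab^2=0$ and $36-12a+a^2-10b^2+3ab^2=0$). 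One small remark: the only step you flag as delicate, the nontriviality of $g|_{L_p}$, is actually automatic and needs none of the parameter hypotheses. Since $F_y(p)=0$ at $[\beta:0:1]$, the tangent form lies in $x,z$ alone and so can never be proportional to $y$, the form of the axis of $\mathrm{diag}(1,-1,1)$; likewise at $[\alpha_i:\alpha_i:1]$ the tangent form is of the shape $A(x+y)+Bz$, which is never proportional to $x-y$. Your verification that the individual coefficients are nonzero (using $b^2\neq4$, $a\neq-2$, $b^2\neq a+2$) is more than sufficient, so this is harmless over-caution rather than a gap. The presumable alternative the authors had in mind — substituting the special parameter values and reading off the contact order, as hinted by the resultants appearing as perfect squares — is a case-by-case computation; your symmetry argument handles all four orbits uniformly and is the cleaner proof.
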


Now, our main result for the case $b\neq0$ is the following.
\begin{theorem}
For the quartics $C_{a,b}$ such that $b\neq0,$ the number of the 1-Weierstrass points together with their geometry are given in the following tables.
\begin{center}
   \textit{Number Classification on $C_{a,b}$}%
\end{center}
\[%
\begin{tabular}
[c]{|c||c|c|}\hline
& \emph{Ordinary flexes} & \emph{Hyperflexes}\\\hline
$(a,b)\in\Gamma$ & $\mathbf{8}$ & $\mathbf{8}$\\\hline
$P(a,b)=0,\,\,\,Q(a,b)\neq0$ & $\mathbf{0}\,\,\,($\em{Resp.}\,\,$\mathbf{16})$ & $\mathbf{12}\,\,\,($\em{Resp.}\,\,$\mathbf{4})$\\\hline
$P(a,b)\neq0,\,\,\,Q(a,b)=0$ & $\mathbf{0}\,\,\,($\em{Resp.}\,\,$\mathbf{16})$ & $\mathbf{12}\,\,\,($\em{Resp.}\,\,$\mathbf{4})$\\\hline
$P(a,b)\neq0,\,\,\,Q(a,b)\neq0$ & $\mathbf{24}\,\,\,($\em{Resp.}\,\,$\mathbf{8})$ & $\mathbf{0}\,\,\,($\em{Resp.}\,\,$\mathbf{8})$\\\hline
\end{tabular}
\]

\bigskip
\begin{center}
   \textit{Orbit Classification on $C_{a,b}$}%
\end{center}
\[%
\begin{tabular}
[c]{|c|c|c|}\hline
 & Ordinary flexes & Hyperflexes\\\hline
$(a,b)\in\Gamma$ & $1_{8}$ & $2_{4}$\\\hline
$P(a,b)=0,\,\,\,Q(a,b)\neq0$ & \em{None} & $1_{4}$, $1_{8}$\\
& $($\em{Resp.}\,\,$2_{8})$ & $($\em{Resp.}\,\,$1_{4})$\\\hline
$P(a,b)\neq0,\,\,\,Q(a,b)=0$ & \em{None} & $1_{4}$, $1_{8}$\\
& $($\em{Resp.}\,\,$2_{8})$ & $($\em{Resp.}\,\,$1_{4})$\\\hline
$P(a,b)Q(a,b)\neq0$ & $3_{8}$ &\em{None}\\
& $($\em{Resp.}\,\,$1_{8})$ & $($\em{Resp.}\,\,$1_{8})$\\\hline
\end{tabular}
\]

where
\[
P(a,b):=a^{2}+b^{2}-ab^{2},\text{ \ \ \ \ }Q(a,b):=36-12a+a^{2}-10b^{2}%
+3ab^{2}.
\]
\end{theorem}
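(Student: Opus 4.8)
The strategy is to combine the global count of $1$-Weierstrass points (Lemma 1.7: exactly $24$ counted with weight, since $g=3$) with the explicit description of $X(C_{a,b})\cap W_1(C_{a,b})$ from Proposition 2.11 and the fact (Lemma 2.12) that every point of $X(C_{a,b})$ lying in $W_1(C_{a,b})$ is a hyperflex, hence carries weight $2$ (Lemma 1.6). The four rows of the tables correspond exactly to the four cases of Proposition 2.11: $(a,b)\in\Gamma$ (i.e. $P=Q=0$); $P=0,Q\neq0$; $P\neq0,Q=0$; and $P\neq0,Q\neq0$. In each case I first tally the weight contributed by the ``special'' (positive-stabilizer) hyperflexes, subtract from $24$, and then argue that the remaining weight is distributed among ordinary flexes forming full-size $G_1$-orbits, using Remark 2.9 ($|Orb_{G_1}P|=8$ for $P\notin X(C_{a,b})$).

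In detail: if $(a,b)\in\Gamma$, Proposition 2.11(I) gives $X\cap W_1 = Orb_{G_1}[\alpha:\alpha:1]\cup Orb_{G_1}[\beta_0:0:1]$, two orbits of $4$ points each (Remark 2.9), all hyperflexes of weight $2$; that accounts for $8$ points and weight $16$, leaving weight $8$ for ordinary flexes, which must be a single orbit of $8$ ordinary flexes ($1_8$, weight $8$). This yields the first row: $8$ ordinary, $8$ hyperflex. If exactly one of $P,Q$ vanishes, Proposition 2.11(II) gives $X\cap W_1$ equal to a single orbit of $4$ hyperflexes. Here there is a dichotomy (the ``Resp.'' in the tables): either these $4$ hyperflexes have a further hyperflex partner orbit outside $X(C_{a,b})$ or not. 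I need to rule out/in the possibility of an $8$-point orbit of hyperflexes off $X(C_{a,b})$: if all $24$ weight units are exhausted by hyperflexes, we get $4$ (on $X$) $+ 8$ (off $X$) $= 12$ hyperflex points ($1_4+1_8$) and no ordinary flexes; otherwise the $4$ special hyperflexes (weight $8$) leave weight $16$, which for ordinary flexes means $16$ points in two orbits of $8$ ($2_8$), giving $16$ ordinary $+\,4$ hyperflex ($1_4$). Both sub-cases are internally consistent with the count $24$, which is why the statement records them as alternatives. Finally, if $P\neq0$ and $Q\neq0$, Proposition 2.11 gives $X\cap W_1=\phi$, so all $24$ weight units lie off $X(C_{a,b})$ in orbits of size $8$: either three orbits of $8$ ordinary flexes ($3_8$, $24$ points) or one orbit of $8$ ordinary flexes plus one orbit of $8$ hyperflexes ($1_8+1_8$, weight $8+16=24$), again the two alternatives in the last row.

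The main obstacle is the sub-case analysis in the rows where $X\cap W_1$ is a single $4$-point orbit or is empty: the global weight count $24$ does not by itself pin down whether an extra generic orbit consists of ordinary flexes or hyperflexes, so one must either produce a finer invariant or (as the ``Resp.'' notation suggests the authors do) accept both possibilities and phrase the theorem as a list of the configurations that occur as $(a,b)$ ranges over the relevant locus. Concretely, I would finish by noting that an $8$-point orbit of hyperflexes is geometrically exceptional and occurs only on a proper subvariety of the $(a,b)$-parameter space, determined by when the resultant computations in Lemmas 2.10 and 2.13 acquire extra double roots; the ``Resp.'' branches in the tables are exactly these subloci. Since the parenthetical $($\emph{Resp.}$)$ entries are the bookkeeping for that dichotomy, the proof reduces to: (i) invoke Lemma 1.7 for the total; (ii) invoke Proposition 2.11 and Lemma 2.12 for the special part; (iii) use Remark 2.9 to see that every remaining orbit has size $8$; (iv) solve the resulting linear Diophantine conditions $2h_{\mathrm{sp}} + 2h_{\mathrm{gen}} + f_{\mathrm{gen}} = 24$ with $h_{\mathrm{gen}},f_{\mathrm{gen}}\in 8\mathbb{Z}_{\geq 0}$ in each of the four cases, and (v) record the finitely many solutions, which are precisely the table entries.
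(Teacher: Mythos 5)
Your proposal is correct and follows essentially the same route as the paper's own proof: invoke the total weight $24$, use the proposition describing $X(C_{a,b})\cap W_{1}(C_{a,b})$ together with the lemma that these intersection points are hyperflexes, note that all remaining orbits have $8$ points, and enumerate the weight distributions, leaving the unresolved alternatives exactly as the ``Resp.'' entries (which the paper, like you, settles only by exhibiting examples in Section 3).
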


\begin{proof}
Recall that the number of the 1-Weierstrass points is 24 counted with their weights. Now, if $(a,b)\in \Gamma,$ then by \emph{Proposition 2.13 $(II)$} and \emph{Lemma 2.14}, the intersections $X(C_{a,b})\cap W_{1}(C_{a,b})$ consists of $2_4$ of hyperflex points. Consequently, by \emph{Remark 2.8}, there exist $1_8$ of ordinary flexes.\\
If $P(a,b)=0,\,\,\,Q(a,b)\neq0$ or $P(a,b)\neq0,\,\,\,Q(a,b)=0$, then by \emph{Proposition 2.13 $(I)$} and \emph{Lemma 2.14}, the intersections $X(C_{a,b})\cap W_{1}(C_{a,b})$ consists of $1_4$ of hyperflex points. Consequently, by \emph{Remark 2.8}, there exist either $1_8$ of hyperflex points or $2_8$ of ordinary flexes.\\
If $P(a,b)Q(a,b)\neq0$, then by \emph{Proposition 2.13 $(I)$}, $X(C_{a,b})\cap W_{1}(C_{a,b})=\phi$. Hence, $W_1\big(C_{a,b}\big)$ consists of orbits each of 8 points. Consequently, we have two cases: either $3_8$ of ordinary flexes or $1_8$ of ordinary flexes together with $1_8$ of hyperflex points.\\
This completes the proof.
\end{proof}

\section{Examples}
This section is devoted to illustrate, through examples, the cases
mentioned in \emph{Theorem 2.14}. It should be noted that under a
given  condition more than one case arise, so it is convenient to
investigate whether these cases can occur. For instance,
\begin{itemize}
    \item For the case $P(a,b)=0$ and $Q(a,b)\neq0$, we give the following
    examples:\begin{description}
    \item[(1)] If $a=\dfrac{6}{5},\,\,b=\dfrac{6}{\sqrt{5}}$, then we have
\[
W_{1}(C_{a,b})=Orb_{G_{1}}\left[1.495...i:0:1\right]\cup Orb_{G_{1}%
}[\zeta_{5}:\epsilon_{5}:1],
\]
where
\[
\zeta_{5}:=0.748...(1+i),\,\,\,\epsilon_{5}:=0.748...(1-i).
\]
That is, $C_{a,b}$ has $12$ hyperflex points classified into $1_4$
and $1_8.$ \item[(2)] If $a=3,\,\,b=\dfrac{3}{\sqrt{2}},$ then we
have
\[
W_{1}(C_{a,b})=Orb_{G_{1}}\left[0.841...i:0:1\right]\cup
Orb_{G_{1}}[\zeta_{3}:\epsilon_{3}:1]\cup Orb_{G_{1}}[\zeta
_{4}:\epsilon_{4}:1],
\]
where
\begin{eqnarray*}
\zeta_{3}&:=&0.089...-0.454...i,\,\,\,\epsilon_{3}:=0.188...+0.947...i,\\
\zeta_{4}&:=&0.089...+0.454...i,\,\,\,\epsilon_{4}:=0.188...-0.947...i.
\end{eqnarray*}
Hence, $C_{a,b}$ has $20$ flex points classified into $1_4$ of
hyperflex points and $2_8$ of ordinary flexes.
\end{description}
\end{itemize}
\newpage
\begin{itemize}
    \item For the case $P(a,b)\neq0$ and $Q(a,b)=0$, we get the following two
examples:
\begin{description}
\item[(3)] If $a=3,\,\,b=3$, then $C_{a,b}$ has $12$ hyperflex points
\cite{pa3, pa12}.
\item[(4)] If $a=0,\,\,b=3\sqrt{\dfrac{2}{5}},$ then we have
\[
W_{1}(C_{a,b})=Orb_{G_{1}}\left[0.562...i:0.562...i:1\right]\cup Orb_{G_{1}}[\zeta_{1}:\epsilon_{1}:1]\cup
Orb_{G_{1}}[\zeta_{2}:\epsilon_{2}:1],
\]
where
\begin{eqnarray*}
\zeta_{1}&:=&0.204...-1.151...i,\,\,\,\epsilon_{1}=0.302...-0.269...i,\\
\zeta_{2}&:=&0.204...+1.151...i,\,\,\,\epsilon_{2}=0.302...+0.269...i.
\end{eqnarray*}
Consequently, $C_{a,b}$ has $20$ flex points classified into $1_4$ of hyperflex points and $2_8$ of ordinary flexes.
\end{description}
\end{itemize}

\begin{itemize}
    \item For the case $P(a,b)Q(a,b)\neq0$, we have the following examples:
\begin{description}
\item[(5)] If $a=4,\,\,b=4$, then $C_{a,b}$ has $24$ ordinary flex points
    \cite{pa3, pa12}.
\item[(6)] If $a=-5,\,\,b=1$, then we have\\
\[
W_{1}(C_{a,b})=Orb_{G_{1}}\left[0.44...(1+i):0.44...(i-1):1\right]\cup Orb_{G_{1}}%
\left[0.93...i:2.27...i:1\right]\] Consequently, $C_{a,b}$ has $16$
flex points classified into $2_8$, one consists of hyperflex points
and the other consists of ordinary flexes.
\end{description}
\end{itemize}
\bigskip
{\fontsize{13.5}{9}\textbf{Concluding remarks.}}\\
We conclude the paper by the following remarks and comments.
\begin{itemize}
\item The computations included in the present paper have been performed by the use of MATHEMATICA program. The source code files are available.
  \item The classification of the 1-Weierstrass points of \emph{Kuribayashi quartics with one parameter,} treated in \cite{pa3, pa12} is a particular case of our main theorems. In fact, letting $a=b$, one gets the following table.

       \begin{center}
        Number classification of 1-Weierstrass points of $C_{a,a}$
      \end{center}
        \[
        \begin{tabular}{|c|c|c|}
      \hline
      & Ordinary flex & Hyperflex \\\hline
       $a=0,3$ & $0$ & $12$ \\\hline
        Otherwise & $24$ & $0$ \\
         \hline
         \end{tabular}
      \]
\item The geometry of the 1-Weierstrass points of the one parameter quartic family defined by the equation
 \[
x^{4}+y^{4}+z^{4}+b(x^{2}+y^{2})z^{2}=0,\quad\quad
(b^2-4)(b^2-2)\neq0,%
\]
is a particular case by substituting $a=0$ in \emph{Theorem 2.6} and
\emph{Theorem 2.14} to get the following table.
\begin{center}
  \textit{Number and Orbit Classification on $C_{a,0}$}%
\end{center}
\[%
\begin{tabular}
[c]{|c|c|c|}\hline & \emph{Ordinary flexes} &
\emph{Hyperflexes}\\\hline
$\ $ & $\ $ & $\mathbf{12}$\\
$b=0$ & $\mathbf{0}$ & $1_{4}$\\
&  & $1_{8}$\\\hline
$b^2=\frac{18}{5}$ & $\mathbf{16}$ & $\mathbf{4}$\\
& $2_{8}$ & $1_{4}$\\\hline
Otherwise & $\mathbf{24}\,\,($Resp.\,\,$\mathbf{8})$ & $\mathbf{0}\,\,($Resp.\,\,$\mathbf{8})$\\
& $3_8\,\,($Resp.\,\,$1_{8})$ & None\,\,$($Resp.\,\,$1_{8})$\\\hline
\end{tabular}
\]
\\
The following examples illustrates that under the condition
$b^2\neq0,\,\frac{18}{5}$, both cases mentioned above can occur. For
instance, let $b=\sqrt{6}$, then $C_{0,b}$ has 16 flex points or,
equivalently,
\[W_1\big(C_{0,b}\big)=Orb_{G_1}\left[-0.6...(1+i):0.6...(1-i):1\right]\cup Orb_{G_1}\left[0.2...i:0.2...i:1\right].\]
On the other hand, let $b=\sqrt{5}$, then $C_{0,b}$ has 24 flex
points or, equivalently,
\[W_1\big(C_{0,b}\big)=Orb_{G_1}[\nu_1:\lambda_1:1]\,\,\cup\,\,Orb_{G_1}[\nu_2:\lambda_2:1]\,\,\cup\,\,Orb_{G_1}[\nu_3:\lambda_3:1],\]
where
\begin{eqnarray*}
\nu_1:&=&0.524132...+0.316563...i,\,\,\,\lambda_1:=-0.417502...+0.812472... i,\\
\nu_2:&=&0.524132...-0.316563...i,\,\,\,\lambda_2:=0.417502...+0.812472...i,\\
\nu_3:&=&-0.410813...i,\,\,\,\lambda_3:=-1.37547...i.
\end{eqnarray*}
\item The main theorems constitute a motivation to solve more general problems. One of these problems is the investigation of the geometry of the 1-Weierstrass points of \emph{Kuribayashi quartics with three parameters family} defined by the equation
      \[C_{a,b,c}: x^4+y^4+z^4+ax^2y^2+bx^2z^2+cy^2z^2=0.\]
      Hayakawa \cite{pa2} studied the conditions under which the number of the Weierstrass points of this family is exactly 12 or $<24.$ However, this problem will be the object of a forthcoming work.\\
      Another problem is the investigation of the geometry of higher order and multiple Weierstrass points of $C_{a,b}$ to generalize the classification of the 2-Weierstrass points of \emph{Kuribayashi quartics with one parameter family} \cite{pa3, pa4}.

\item The technique used in this paper is completely different from that followed by Hayakawa \cite{pa5}. Our technique consists of dividing the quartics by group actions into finite orbits and investigate the geometry of these orbits. The results obtained are more informative than those obtained by Hayakawa.
\end{itemize}
\textbf{Acknowledgment} The authors would like express their sincere
gratitude to Prof. Nabil L. Youssef for his guidance throughout the
preparation of this work.


\bigskip\noindent


\begin{thebibliography}{9}                                                                                                %
\bibitem {pa3}Alwaleed K., \emph{Geometry of 2-Weierstrass points on certain
plane curves}, Ph.\,D. Thesis, Saitama Univ., 2010.

\bibitem {pa4}Alwaleed K. and Kawasaki M., \emph{2-Weierstrass points of
certain plane curves of genus three}, Saitama Math. J., \textbf{26} (2009), 49-65.

\bibitem {pa1}Cox, D., Little, J. and O$^{'}$Shea, D., \emph{Ideals, Varities and Algorithms,} Springer-Verlag, New York, 1992.

\bibitem {pa6}Del Centina, A., \emph{Weierstrass points and their impact in
the study of algebraic curves: a historical account from the \textquotedblleft Luckensatz\textquotedblright\,\, to
the 1970s}, Ann Univ. Ferrara, \textbf{54} (2008), 37-59.

\bibitem {pa7}Farkas, H.\,M. and Kra, I., \emph{Riemann Surfaces}, GTM\textbf{71}, Springer Verlag, New York, 1980.

\bibitem {pa23}Francesc B., \emph{Automorphisms groups of genus 3 curves}, Notes del Seminari de Teoria Nombres UB-UAB-UPC 2004/05: Genus 3 curves. Barcelona, Gener 2005.

\bibitem {pa8}Gibson, C.\,G., \emph{Elementary geometry of algebraic curves}, Cambridge University Press, 1998.

\bibitem {pa2}Hayakawa K., \emph{On the family of Riemann surfaces of genus 3 defined by $x^4+y^4+z^4+2tx^2y^2+ux^2z^2+\upsilon y^2z^2=0$}, Bull. Fac. Sci. Eng., Chuo Univ., \textbf{42} (1999), 11-26.

\bibitem {pa5}Hayakawa K., \emph{A copmutation of the numbers of Weierstrass points of a family of Riemann surfaces defined by $x^4+y^4+z^4+2ax^2y^2+2bx^2z^2+2by^2z^2=0$}, Bull. Fac. Sci. Eng., Chuo Univ., \textbf{43} (2000), 1-6.


\bibitem {pa11}Kuribayashi, A. and Komiya, K., \emph{On Weierstrass points of non-hyperelliptic compact Riemann surfaces of genus three}, Hiroshima Math. J., \textbf{7} (1977), 743-768.

\bibitem {pa15}Kuribayashi, A. and Komiya, K., \emph{On Weierstrass points and automorphisms of curves of genus three}, In: \textquotedblleft Algebraic geometry\textquotedblright, Springer, 1979, pp. 253-299.

\bibitem {pa12}Kuribayashi, A. and Sekita, E., \emph{On a family of Riemann
surfaces I}, Bull. Fac. Sci. Eng., Chuo Univ., \textbf{22} (1979), 107-129.


\bibitem {pa13}Miranda, R., \emph{Algebraic Curves and Riemann Surfaces}, Amer.
Math. Soc., 1995.

\bibitem {pa14}Vermeulen, A., \emph{\ Weierstrass points of weight two on
curves of genus three}, Ph.\,D. Thesis, Amsterdam Univ., 1983.
\end{thebibliography}
\end{document}